\newtheorem{proposition}{Proposition}[section]
\newtheorem{theorem}[proposition]{Theorem}
\newtheorem{lemma}[proposition]{Lemma}
\newtheorem{example}[proposition]{Example}
\newtheorem{definition}[proposition]{Definition}
\newtheorem{remark}[proposition]{Remark}
\newtheorem{corollary}[proposition]{Corollary}
\newcommand{\etale}{\'etal\@ifstar{\'e}{e\xspace}}
\newcommand\blfootnote[1]{%
  \begingroup
  \renewcommand\thefootnote{}\footnote{#1}%
  \addtocounter{footnote}{-1}%
  \endgroup
}
\newcommand{\Addresses}{{
  \bigskip
\noindent\textsc{Department of Mathematics, University of California, Berkeley,}\par\nopagebreak
\noindent Email: \texttt{tong.zhou@berkeley.edu}
  }}
\newcommand{\CC}{\mathbf{C}}
\newcommand{\ZZ}{\mathbf{Z}}
\newcommand{\QQ}{\mathbf{Q}}
\newcommand{\NN}{\mathbf{N}}
\newcommand{\FF}{\mathbf{F}}
\newcommand{\PP}{\mathbf{P}}
\newcommand{\CF}{\mathcal{F}}
\newcommand{\CG}{\mathcal{G}}
\newcommand{\CM}{\mathcal{M}}
\newcommand{\CB}{\mathcal{B}}
\newcommand{\Gm}{\mathbf{G}_m}
\newcommand{\X}{\times}
\newcommand{\OX}{\otimes}
\newcommand{\BX}{\boxtimes}
\newcommand{\QQl}{\mathbf{Q}_\ell}
\newcommand{\QQbarl}{\overline{\mathbf{Q}}_\ell}
\newcommand{\FFbarl}{\overline{\mathbf{F}}_\ell}
\newcommand{\ZZbarl}{\overline{\mathbf{Z}}_\ell}
\newcommand{\CH}{\mathcal{H}}
\newcommand{\CL}{\mathcal{L}}
\newcommand{\CP}{\mathcal{P}}
\newcommand{\CK}{\mathcal{K}}
\newcommand{\AAA}{\mathbf{A}}
\newcommand{\DD}{\mathbb{D}}
\newcommand{\UF}{\underline{\mathcal{F}_1}}
\newcommand{\CA}{\mathcal{A}}
\newcommand{\RHom}{\mathcal{RH}om}
\title{The Fourier Transform and Characteristic Cycles of Monodromic $\ell$-adic Sheaves}
\author{Tong Zhou}
\date{}
\begin{document}
\maketitle
\begin{abstract}\blfootnote{February 2024}
Brylinski and Malgrange proved in 1986 that, for a monodromic algebraic D-module on a finite dimensional vector space over the complex numbers, its characteristic cycle is canonically identified with the characteristic cycle of its Fourier transform. We prove the exact analogue of this in the $\ell$-adic context. 
\end{abstract}


\section{Introduction}
Let $V=\mathrm{Spec}(\CC[x_1,x_2,...,x_d])$ be a finite dimensional vector space over $\CC$. Denote by $D(V)$ the triangulated category of bounded holonomic algebraic D-modules on $V$. $\CM\in D(V)$ is called \underline{monodromic} if the Euler vector field $\mathrm{eu}=\Sigma_i x_i\frac{\partial}{\partial x_i}$ acts locally finitely on each $\CH^i(\CM)$ (i.e., for any local section $s$, $\{\mathrm{eu}^n(s)\}_{n\in \NN}$ span a finite dimensional $\CC$-vector space). Denote by $D_{mon}(V)$ the full subcategory of monodromic D-modules. Let $F$ denote the Fourier transform of D-modules (c.f. \cite[\nopp 7.1]{katz_transformation_1985}). It is easy to see that being monodromic is preserved under the Fourier transform. We have:

\begin{theorem}[Brylinski-Malgrange, {\cite[\nopp 7.25]{brylinski_transformations_1986}}]\label{thm_bry_mal}
    1) If $\CM\in D_{mon}(V)$, then $CC(\CM)=CC(F\CM)$. 2) Further assume $\CM$ is regular, then so is $F\CM$.
\end{theorem}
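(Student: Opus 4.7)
The plan is to exploit the constraints that monodromicity imposes on the characteristic variety. For any holonomic $\CM$ the characteristic variety is a conic Lagrangian in $T^*V$ with respect to scaling in cotangent fibers; local finiteness of the Euler vector field forces it to also be invariant under the $\Gm$-action on the base by dilation. In coordinates $T^*V = V\times V^\vee$ (with the standard symplectic form from the canonical pairing), a Lagrangian cone bihomogeneous under both actions must decompose as a finite union of conormals $T^*_W V = W\times W^\perp$ with $W\subset V$ a linear subspace. Establishing this structural description carefully is the first step.

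Given this, the set-theoretic equality of characteristic varieties under the canonical identification $T^*V \cong T^*V^\vee$ is then essentially formal: the Fourier transform corresponds geometrically to the symplectic involution $(x,\xi)\mapsto(-\xi,x)$, which sends $W\times W^\perp$ to $W^\perp\times W = T^*_{W^\perp}V^\vee$, and composition with the canonical identification $T^*V^\vee \cong T^*V$ recovers $T^*_W V$. To upgrade this to equality of cycles, I would devissage: every monodromic $\CM$ admits a finite filtration whose graded pieces are (shifts of) intermediate extensions $j_{!*}\CL$ of rank-one Kummer D-modules along open $\Gm$-orbits in linear subspaces $W\subset V$. Since $CC$ is additive on exact triangles, it suffices to verify the equality on these building blocks. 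Using the compatibility of $CC$ with external products (\cite{saito_characteristic_ext_2017}) this reduces to the one-dimensional case $V=\AAA^1$, where the Fourier transforms of Kummer D-modules and their characteristic cycles are classical and can be checked by direct inspection.

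The main obstacle is part 2, the preservation of regularity, which cannot be read off from the characteristic cycle. For this I would invoke Malgrange's description of the Fourier transform on monodromic D-modules as a shifted nearby-cycles functor along the pairing $\langle x,\xi\rangle\colon V\times V^\vee\to\AAA^1$, pushed forward along the projection to $V^\vee$. Since pullback, nearby cycles along a regular function, and proper pushforward all preserve regular holonomicity, regularity of $F\CM$ follows. Alternatively, on $D_{mon}(V)$ the Fourier transform may be identified up to twist with a Radon-type transform integrating along the family of lines through the origin, for which regularity is manifest from the fact that the kernel is a constant sheaf on an incidence subvariety.
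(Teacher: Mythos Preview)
The paper does not itself prove Theorem~\ref{thm_bry_mal}; it is cited from \cite{brylinski_transformations_1986} as motivation, and the paper then proves the $\ell$-adic analogue (Theorem~\ref{thm_main}). Nonetheless, your proposal contains a genuine error that would equally invalidate it as an approach to the analogue the paper does prove.

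Your central structural claim---that a bihomogeneous Lagrangian cone in $T^*V=V\times V'$ must be a finite union of conormals $W\times W^\perp$ to \emph{linear} subspaces---is false. Take $V=\CC^3$ and $Z=\{xy-z^2=0\}$, the quadric cone. The D-module $i_*\mathcal{O}_Z$ (or, in the constructible setting, $\underline{\Lambda}_Z$) is monodromic with trivial twist, and the closure of $T^*_{Z^{\mathrm{sm}}}V$ is an irreducible component of its characteristic variety. This component is bihomogeneous (for any homogeneous $f$, the conormal line at $tx$ coincides with that at $x$, so the conormal is stable under independent scaling of base and fibre) and Lagrangian, yet its projection to $V$ is $Z$, which is not linear. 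The same example defeats your d\'evissage: the irreducible constituent supported on $Z$ is the intermediate extension of the constant rank-one local system on $Z^{\mathrm{sm}}$, not a Kummer object on an open $\Gm$-orbit in a linear subspace. Simple monodromic objects are intermediate extensions of local systems on smooth locally closed \emph{conic} subvarieties (see the proof of Proposition~\ref{prop_basic_twist}), and conic is far weaker than linear.

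For comparison, the paper's proof of the $\ell$-adic analogue avoids any explicit description of the components of $CC$. For $\Gm$-equivariant perverse sheaves it relates the Fourier transform to the Radon transform on $\PP(V)$ via \cite[\nopp 9.13]{brylinski_transformations_1986} and invokes Saito's compatibility of $CC$ with the Radon transform; the general twist is then reduced to the trivial-twist case either by Beilinson's untwisting trick on $V\times\AAA^1$ or via the notion of having the same wild ramification. Your sketch for part 2) via operations preserving regularity is closer in spirit to what Brylinski--Malgrange actually do, but since the paper does not treat part 2) (and notes its $\ell$-adic analogue is false), there is nothing to compare there.
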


Here $CC$ denotes the characteristic cycle, $V'$ denotes the dual of $V$, and $T^*V$ is implicitly canonically identified with $T^*V'$ via $T^*V=V\X V'\cong V'\X V\cong T^*V'$. Note that statement 1) in Theorem \ref{thm_bry_mal} as we stated is more general than Brylinski-Malgrange's original version, but in fact their proof works in this generality.\\    

The main theorem of this paper is the analogue of statement 1) for $\ell$-adic sheaves\footnote{Note that the analogue of statement 2) is false, i.e., being monodromic tame is not preserved under the Fourier transform. Example: let $k$ be an algebraically closed field of characteristic $p>0$, $V=\mathrm{Spec}(k[x,y,z]), Z=\{z^{p-1}y=x^p\}\hookrightarrow V$. Consider $\CF:=\underline{\Lambda}_Z$, which is evidently monodromic and tame. But, combining \cite[\nopp 9.13]{brylinski_transformations_1986} and the computation in \cite[\nopp 4.21]{zhou_stability_2023}, one sees that $F\CF$ is not tame.}. Let $V$ be a finite dimensional vector space over an algebraically closed field of characteristic $p>0$. Let $\Lambda$ be either a finite extension of $\FF_\ell$ (the finite case), or a finite extension of $\QQ_\ell$, or $\QQbarl$ (the rational case), for $\ell$ a prime different from $p$. Denote by $D(V)$ the triangulated category of bounded constructible $\Lambda$-\etale sheaves. We will prove:

\begin{theorem}[Corollary \ref{cor_main_text}]\label{thm_main}
    If $\CF\in D(V)$ is monodromic, then $CC(\CF)=CC(F\CF)$ and $SS(\CF)=SS(F\CF)$. 
\end{theorem}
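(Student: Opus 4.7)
The plan is to prove $CC(\CF)=CC(F\CF)$ directly; since $SS$ is recovered as the support of $CC$, the equality $SS(\CF)=SS(F\CF)$ follows. The overall strategy mirrors Brylinski--Malgrange's approach to Theorem~\ref{thm_bry_mal}: exploit the fact that a monodromic sheaf on $V$ is governed by its descent to $\PP(V)$ together with its stalk at the origin, and realise the Fourier transform, up to appropriate twists and shifts, as a projective Radon transform between $\PP(V)$ and $\PP(V')$ through the universal incidence hyperplane.

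The first observation is that for monodromic $\CF$, each cohomology sheaf $\CH^i\CF$ is locally constant along every $\Gm$-orbit, which forces each irreducible component of $SS(\CF)$ to be \emph{bi-conical}, i.e., invariant under scaling of both $V$ and of the cotangent fibre. Under the canonical identification $T^*V=V\times V'\congto V'\times V=T^*V'$, bi-conical Lagrangians are sent to bi-conical Lagrangians, so the a priori target supporting set for $SS(F\CF)$ has the expected shape. One also notes that $F\CF$ is again monodromic, so its $SS$ and $CC$ satisfy the same bi-conicality.

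Writing $F\CF=p_{V'!}(p_V^*\CF\otimes\CL_\psi(\langle\cdot,\cdot\rangle))[d](d/2)$ and restricting to the complements of the origins in $V$ and $V'$, one passes through the $\Gm$-quotients and presents $F\CF|_{V'\setminus 0}$ as a Radon-type transform of the descent of $\CF|_{V\setminus 0}$ to $\PP(V)$. Since Radon transforms are compositions of smooth pullback, tensor product with a rank-one sheaf coming from the incidence geometry, and proper pushforward, Saito's formulas in \cite{saito_characteristic_2017} for the behaviour of $CC$ under these operations determine $CC(F\CF)$ on $V'\setminus 0$ from $CC(\CF)$ on $V\setminus 0$; combined with the bi-conical structure and $\Gm$-equivariance of the Fourier kernel, this yields the desired identification away from the origins. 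The extensions across $0\in V$ and $0\in V'$ require a separate analysis comparing the coefficients on $V\times\{0\}$ and on $\{0\}\times V'$: these coefficients record the generic rank and the Euler characteristic of $\CF$ respectively, and are interchanged correctly by the Plancherel-type symmetry of the Fourier--Deligne transform.

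The main obstacle, as highlighted in the footnote, is the possible wildness of $F\CF$: the Fourier transform of a tame monodromic sheaf need not be tame, so the argument must function in positive characteristic with no tameness available. To pin down the multiplicities on wildly ramified irreducible components of $SS(F\CF)$, the plan is to use Saito's Milnor-formula characterisation of $CC$ applied to generic test functions on $V'$, expressing the required total-dimension computations in terms of $CC(\CF)$ via the Radon presentation above. Should this direct matching of Milnor numbers prove too delicate on wild strata, a fallback is to invoke \cite{beilinson_constructible_2016} and argue stratum-by-stratum on a sufficiently fine $\Gm$-invariant Whitney stratification of $V$ adapted to $\CF$, reducing to local models where both sides of the identity are individually computable.
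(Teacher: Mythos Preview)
Your plan has two genuine gaps.

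\textbf{First, the descent to $\PP(V)$ is not available in general.} You write that ``a monodromic sheaf on $V$ is governed by its descent to $\PP(V)$'' and that ``one passes through the $\Gm$-quotients and presents $F\CF|_{V'\setminus 0}$ as a Radon-type transform of the descent of $\CF|_{V\setminus 0}$ to $\PP(V)$''. This only works when $\CF$ has \emph{trivial twist}, i.e.\ is $\Gm$-equivariant. A perverse irreducible monodromic $\CF$ with Kummer twist $\CK\neq\mathrm{triv}$ restricts on each $\Gm$-orbit to a constant sheaf tensored with $\CK$; such a sheaf does \emph{not} descend to $\PP(V)$, and the Radon presentation of $F\CF$ is simply not available. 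The paper's entire point is to get around this: one either pulls back to $V\times\AAA^1$ and forms $\CF\boxtimes\CK^{-1}$, which \emph{does} descend to $\PP(V\times\AAA^1)$, so the trivial-twist result (Corollary~\ref{cor_triv_Fgood}) applies and one reads off $CC(\CF)$ from $CC(\CF\boxtimes\CK^{-1})=CC(\CF)\boxtimes CC(\CK^{-1})$; or one argues that tensoring with a tame rank-one sheaf does not change wild ramification and hence does not change $CC$. Your proposal contains neither ingredient, and the Milnor-number matching and stratum-by-stratum fallback you sketch do not address this obstruction either.

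\textbf{Second, $SS$ is not the support of $CC$ in general.} For arbitrary $\CF\in D(V)$ there can be cancellation in $CC$ (take $\CF=\CG\oplus\CG[1]$). The equality $\mathrm{supp}\,CC(\CF)=SS(\CF)$ holds only for perverse $\CF$ (Proposition~\ref{prop_CC_radon_rat}.2). The paper therefore first reduces both $CC$ and $SS$ to the irreducible constituents (additivity of $CC$ and $SS$, and the fact that $F$ permutes the constituents), proves $CC(\CP)=CC(F\CP)$ for each perverse irreducible $\CP$, and only then uses effectivity to conclude $SS(\CP)=SS(F\CP)$. Your first paragraph skips this reduction.

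A minor point: the wildness of $F\CF$ that you flag as the ``main obstacle'' is not really where the difficulty lies. Once the trivial-twist case is in hand, Saito's compatibility of $CC$ with the Radon transform (\cite[\nopp 7.5]{saito_characteristic_2017}) gives the equality directly, with no separate analysis of wild strata needed. The genuine obstacle is the non-trivial twist, as above.
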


Here $CC$ denotes the characteristic cycle, $SS$ denotes the singular support\footnote{We refer to \cites{beilinson_constructible_2016}{saito_characteristic_2017} for the theory of singular support and characteristic cycle for sheaves with finite coefficients, and to \cites{umezaki_characteristic_2020}{barrett_singular_2023} for the case of rational coefficients.}, and $F$ denotes the $\ell$-adic Fourier transform or its finite coefficient analogue (c.f. \cite{laumon_transformation_1987}). $\CF\in D(V)$ is called \underline{monodromic} if all $\CH^i(\CF)$ are \emph{tame} local systems on all $\Gm$-orbits. This is preserved under the Fourier transform (Proposition \ref{prop_basic_twist}.4).\\ 

In fact, we will prove the following theorem, which implies Theorem \ref{thm_main} by the additivity of characteristic cycles and singular supports with respect to irreducible constituents. We first introduce a terminology.

\begin{definition}[F-good]
    $\CF\in D(V)$ is \underline{F-good} if for each irreducible constituent $\CP$,\footnote{This means $\CP$ is an irreducible subquotient of some $^p\CH^i(\CF)$.} $CC(\CP)=CC(F\CP)$.
\end{definition}

\begin{theorem}[Theorem \ref{thm_main_text}]\label{thm_main'}
    Monodromic sheaves are F-good.
\end{theorem}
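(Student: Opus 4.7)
The plan is to reduce to irreducible monodromic perverse sheaves and then compare characteristic cycles via the projective Radon transform, mimicking Brylinski--Malgrange's D-module argument with $\ell$-adic tools in place.

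\emph{Reduction.} By the definition of F-goodness, I may assume $\CF = \CP$ is an irreducible monodromic perverse sheaf on $V$. If $\CP$ is supported at the origin (a skyscraper), then $F\CP$ is (a shift of) the constant sheaf on $V'$, and both characteristic cycles equal the corresponding rank-one multiple of the zero section, so the equality is immediate. Otherwise, $\CP$ is an intermediate extension $j_{!*}(\CK[\dim U])$, where $j\colon U \hookrightarrow V$ is the inclusion of a smooth, irreducible, locally closed, $\Gm$-stable subvariety of $V \setminus \{0\}$, and $\CK$ is a tame irreducible local system on $U$. Monodromicity selects, after possibly passing to an \etale $\Gm$-cover and a $\chi$-isotypic piece, a tame character $\chi$ of $\pi_1^{\mathrm{tame}}(\Gm)$ such that $\CK \cong p^*\CG \otimes \CL_\chi$ on $U$, where $p\colon U \to U/\Gm$ is the orbit projection, $\CG$ is a local system on $U/\Gm$, and $\CL_\chi$ is the associated Kummer sheaf along the $\Gm$-fibers.

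\emph{Radon reduction.} Restricted to $\chi$-isotypic monodromic perverse sheaves, the Fourier transform $F\colon D(V)\to D(V')$ can be identified, up to a Kummer twist and a Gauss-sum scalar, with a version of the projective Radon transform $R\colon D^b(\PP(V))\to D^b(\PP(V)^\vee)$ realized by the incidence correspondence $Z=\{(x,H):x\in H\}\subset \PP(V)\times \PP(V)^\vee$. The CC-compatibility of $R$ should then follow from: (i) an explicit identification of the singular support of the IC sheaf of $Z$ with the conormal of projective duality, using Saito's formula for the CC of an external product \cite{saito_characteristic_ext_2017} and the smoothness of $Z$ over each projective factor; (ii) the standard pullback/pushforward rules for CC under the two smooth proper projections out of $Z$; and (iii) the self-duality $T^*\PP(V) \cong T^*\PP(V)^\vee$ implementing projective duality at the cotangent level. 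Reassembling the Kummer twist $\CL_\chi$ and combining with the skyscraper case should then yield $CC(\CP)=CC(F\CP)$.

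The main obstacle lies in carrying out the CC-comparison in the $\ell$-adic setting, where, unlike for D-modules, the Fourier transform can introduce wild ramification at infinity (as the footnote illustrates). Such wildness may produce spurious components in $CC(F\CP)$ that are invisible to the Radon-transform analysis above. To exclude these I would appeal to Saito's Milnor-formula characterization of CC as the unique Lagrangian cycle satisfying the local index formula at isolated characteristic points, combined with the stability-of-vanishing-cycles results from the author's prior work \cite{zhou_stability_2023}, which control precisely the vanishing-cycle contributions in directions where $F\CP$ fails to be tame. An alternative would be to bypass the Radon transform entirely and prove the equality point by point via Laumon's stationary phase formula, but this trades one technical difficulty (CC-under-Radon) for another (explicit control of vanishing cycles of Fourier-transformed monodromic sheaves).
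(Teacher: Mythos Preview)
Your outline has the right reduction to irreducible perverse monodromic sheaves and the right intuition that the Radon transform should carry the weight, but the core of the argument --- the ``Radon reduction'' paragraph --- is not a proof, and the gap you flag at the end is exactly where the substance lies.

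Two concrete problems. First, the decomposition $\CK \cong p^*\CG \otimes \CL_\chi$ you invoke requires a trivialisation of the $\Gm$-torsor $U \to U/\Gm$, which does not exist globally over $\mathring{V}$ when $\dim V > 1$. The paper's Proposition~\ref{prop_loc_decomp} makes this precise: such a splitting exists only after choosing a local section $\sigma$, and the ``projective component'' $\underline{\CF}_\sigma$ depends on that choice. Second, and more seriously, your assertion that on $\chi$-isotypic sheaves the Fourier transform ``can be identified, up to a Kummer twist and a Gauss-sum scalar, with a version of the projective Radon transform'' is exactly what would need to be proved. Brylinski's formula $(F\tilde{\CF})|_{\mathring{V}'}\cong q'^*R\CG$ applies when $\CF|_{\mathring{V}}$ genuinely descends to $\PP(V)$, i.e.\ in the trivial-twist case; there is no analogous off-the-shelf identity for nontrivial $\chi$. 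Your proposed remedies (Milnor formula plus vanishing-cycle stability, or stationary phase) are not arguments but names of tools; neither yields $CC(\CP)=CC(F\CP)$ without substantial further work.

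The paper closes this gap in two different ways, both of which you are missing. The slicker one (due to Beilinson) is to pass to $V\times\AAA^1$ and consider $\CF\boxtimes\CK^{-1}$: the extra $\AAA^1$-factor supplies a global section of the $\Gm$-action, so this external product \emph{does} descend to $\PP(V\times\AAA^1)$, and the trivial-twist case applies directly; one then recovers $CC(\CF)=CC(F\CF)$ from $CC(\CF_1\boxtimes\CF_2)=CC(\CF_1)\boxtimes CC(\CF_2)$. The paper's original argument instead works on an affine chart, writes $\CF|_{U_1}$ as (pullback-from-$\PP(V)$)$\,\otimes\,pr_1^*\CK$, and then uses the notion of \emph{same wild ramification} to show that tensoring by the tame rank-one sheaf $pr_1^*\CK$ leaves $CC$ unchanged --- this is the precise mechanism that rules out the ``spurious wild components'' you worry about, and it is not a consequence of the Milnor formula alone.
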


Our proof of Theorem \ref{thm_main'} consists of a precise realisation of the following intuition: a monodromic sheaf “decomposes” into a “projective component” and a “radial component” (the twist). The case where the twist is trivial can be proved utilising the relation between the Radon transform and the Fourier transform (c.f. \cite[\nopp 9.13]{brylinski_transformations_1986}) and the fact that characteristic cycles behave well under the Radon transform (\cite[\nopp 7.5]{saito_characteristic_2017}). The general case then follows, because the radial component is tame by monodromicity, and thus does not affect the characteristic cycle. Our original way of making the last sentence precise uses the notion of having the same wild ramification (see \cites{kato_wild_2018, kato_wild_2021} and references therein). Beilinson pointed out that the general case in fact follows formally from the trivial twist case by untwisting the sheaf after pulling back to $V\X\AAA^1$. This leads to a much simpler proof. Both proofs are presented.\\

In §\ref{sec_prelim}, we make a preliminary study on monodromic sheaves and F-good sheaves. In §\ref{sec_triv_twist}, we prove Theorem \ref{thm_main'} in the trivial twist case, and give a formula for the coefficient of $T^*_0V$ in $CC(\CF)$. In §\ref{sec_proof}, we prove Theorem \ref{thm_main'}. The Appendix reviews basic facts about characteristic cycles of sheaves with rational coefficients and the notion of having the same wild ramification.\\

In \cite{zhou_character_2024}, we will apply our results to give a microlocal characterisation of admissible (or character) sheaves on reductive Lie algebras in positive characteristic.

\subsection*{Conventions}
     We fix an algebraically closed field $k$ of characteristic $p>0$ and a prime $\ell\neq p$. A variety means a finite type reduced separated scheme over $k$. For a variety $X$, $D(X)$ denotes $D^b_c(X,\Lambda)$ (\cite[\nopp 1.1]{weil_II}). $\Lambda$ is either a finite extension of $\FF_\ell$, or a finite extension of $\QQ_\ell$, or $\QQbarl$. We refer to the former as the finite coefficients case, and the latter as the rational coefficients case. In all statements below, $\Lambda$ is understood to be either finite or rational unless otherwise specified. For $\CF\in D(X)$, by an irreducible constituent of $\CF$ we mean an irreducible subquotient of some $^p\CH^i(\CF)$.\\

     All derived categories are in the triangulated sense. All sheaf-theoretic functors are derived. A “sheaf” means an object of $D(X)$. A “local system” means an object of $D(X)$ whose cohomology sheaves are locally constant (if $\Lambda$ is finite) or lisse (if $\Lambda$ is rational) with finite type stalks. \\

     $V$ denotes a finite dimensional vector spaces over $k$, $V'$ denotes its dual, $\mathring{V}$ denotes $V-\{0\}$, $\PP(V)$ denotes the projectivisation of $\mathring{V}$, $q$ denotes the projection $\mathring{V}\rightarrow \PP(V)$.\\ 
    
     $\Gm$ acts on $V$ by scaling. For $n\geq 1$ in $\NN$, we call $\theta(n): \Gm\X V\rightarrow V, (\lambda,v)\mapsto \lambda^n v$ the $n$-twisted scaling action. We fix a non-trivial character $\psi: \ZZ/p \rightarrow \Lambda^\X$. Fourier transforms are denoted by $F$ and are with respect to this character unless otherwise specified. As we work over an algebraically closed field, we may ignore Tate twists.\\

     By a Kummer sheaf $\CK$, we mean the !-extension to $\AAA^1$ of a rank 1 local system in degree $-1$ on $\Gm$ corresponding to a non-trivial continuous character from the tame fundamental group $\pi^t_1(\Gm,1)$ to $\Lambda^\X$. We sometimes abuse notations and denote its restriction to $\Gm$ also by $\CK$. We denote by $\CK^{-1}$ the Kummer sheaf corresponding to the inverse character of that of $\CK$.

\subsection*{Acknowledgement}
I am grateful to David Nadler for many valuable discussions, and for suggesting various improvements to the draft of this paper. I would also like to thank Sasha Beilinson for discussions, especially for pointing out a simpler way to deduce the general case of our main theorem from the trivial twist case.

\section{Preliminaries on monodromic sheaves and F-good sheaves}\label{sec_prelim}
The setup is as in the Conventions. $\Lambda$ can be either finite or rational, unless otherwise specified. For completeness, we have included more materials in this section than are actually needed in the sequel. Recall:

\begin{definition}[monodromic sheaves, \cite{verdier_specialisation_1983}]\label{def_mon}
    A sheaf $\CF$ on $V$ is \underline{monodromic} if the restriction of all $\CH^i(\CF)$ to all $\Gm$-orbits are tame local systems.
\end{definition}

When $\Lambda$ is finite, we have the following crucial equivalent characterisation of monodromic sheaves.

\begin{proposition}[{\cite[\nopp 5.1]{verdier_specialisation_1983}}]\label{prop_verdier}
    Let $\Lambda$ be finite. Then, $\CF\in D(V)$ is monodromic if and only if $\exists$ $n>0$ in $\NN$ prime to $p$ such that there exists an isomorphism $\theta(n)^*\CF\tilde{\rightarrow}pr^*\CF$. Here $\theta(n): \Gm\X V\rightarrow V, (\lambda,v)\mapsto \lambda^n v$ is the n-twisted scaling action and $pr: \Gm\X V\rightarrow V$ is the projection.
\end{proposition}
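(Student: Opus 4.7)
The proposition is an ``if and only if''. The ``if'' direction is the easy one.

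Given an isomorphism $\theta(n)^*\CF \tilde{\rightarrow} pr^*\CF$ on $\Gm \X V$ with $\gcd(n,p)=1$, I would restrict along $\Gm \cong \Gm \X \{v\} \hookrightarrow \Gm \X V$ for a point $v$ of any $\Gm$-orbit $O$. Composed with the orbit map $\phi_v: \Gm \to O$, $\lambda \mapsto \lambda v$, the restriction becomes $[n]^*(\CH^i(\CF)|_O) \cong \underline{\CF_v}$, where $[n]: \Gm \to \Gm$ is the $n$-th power map. Since $[n]$ is finite \'etale of degree prime to $p$, hence tame, any local system trivialized by $[n]^*$-pullback is itself tame; so each $\CH^i(\CF)|_O$ is tame.

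For the ``only if'' direction, assume $\CF$ is monodromic. My plan proceeds in two stages: first produce a suitable $n$, then build the global isomorphism. For stage one, using constructibility, I choose a finite $\Gm$-stable stratification $\{S_\alpha\}$ of $V$ on which every $\CH^i(\CF)$ is a local system. The restriction to each orbit inside $S_\alpha$ is a tame local system with finite monodromy (since $\Lambda$ is finite) of order prime to $p$. Finiteness of the stratification and of the cohomological range of $\CF$ yields a finite set of monodromy orders; let $n$ be their least common multiple, necessarily prime to $p$. Then $[n]^*(\CH^i(\CF)|_O)$ is trivial for every orbit $O$ simultaneously.

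For stage two, I upgrade the orbit-wise trivializations to a global isomorphism $\theta(n)^*\CF \cong pr^*\CF$ on $\Gm \X V$ via $\mu_n$-Galois descent along the \'etale cover $[n] \X \mathrm{id}_V$. Factoring $\theta(n) = a \circ ([n] \X \mathrm{id}_V)$ with $a = \theta(1)$ the scaling action, both $\theta(n)^*\CF = ([n] \X \mathrm{id}_V)^* a^*\CF$ and $pr^*\CF = ([n] \X \mathrm{id}_V)^* pr^*\CF$ carry canonical $\mu_n$-equivariant structures coming from the deck action. By stage one the two pulled-back sheaves are fiberwise isomorphic on each $\Gm \X \{v\}$, and the goal is to patch these into a $\mu_n$-equivariant isomorphism upstairs, which then descends to the desired isomorphism downstairs. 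The main obstacle is exactly this patching: producing a coherent global isomorphism from fiberwise data and verifying $\mu_n$-equivariance. Pointwise trivializations are abundant (a torsor under automorphisms of $\CF_v$), but a coherent global choice requires the constructibility of $\CF$ together with the crucial tameness of $[n]$ (i.e. $\gcd(n,p)=1$), which makes $[n] \X \mathrm{id}_V$ a $\mu_n$-Galois \'etale cover and thereby legitimizes the descent.
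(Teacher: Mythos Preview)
Your treatment of the ``if'' direction is correct and matches the paper's proof essentially verbatim: pass to cohomology sheaves, restrict to $\Gm\times\{v\}$, and observe that a sheaf trivialised by the degree-$n$ Kummer cover with $p\nmid n$ is tame.

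For the ``only if'' direction the paper simply cites Verdier, so your attempt is not comparable to anything in the paper. Your stage one is essentially right, though the sentence ``finiteness of the stratification \dots\ yields a finite set of monodromy orders'' hides the actual reason: on each stratum $S_\alpha$ the local system $\CH^i(\CF)|_{S_\alpha}$ has monodromy landing in a \emph{finite} subgroup of $\mathrm{GL}(\CF_x)$ (because $\Lambda$ is finite), and every orbit monodromy factors through it; you should say this.

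Stage two, however, is a genuine gap, and the $\mu_n$-descent framing does not help. You write $\theta(n)^*\CF=\pi^*(a^*\CF)$ and $pr^*\CF=\pi^*(pr^*\CF)$ with $\pi=[n]\times\mathrm{id}$, and note both carry canonical $\mu_n$-structures. But a $\mu_n$-equivariant isomorphism between them would descend to $a^*\CF\cong pr^*\CF$, i.e.\ the $n=1$ case, which is strictly stronger than what you are trying to prove and generally false. Conversely, a non-equivariant isomorphism between them is \emph{exactly} the isomorphism $\theta(n)^*\CF\cong pr^*\CF$ you seek, so invoking descent buys nothing. You are left with the bare problem of globalising fiberwise isomorphisms, which you correctly identify as ``the main obstacle'' but then do not resolve; the closing sentence about constructibility and tameness ``legitimising the descent'' does not constitute an argument. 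There is also a second, independent issue you do not address: even if you obtained $\theta(n)^*\CH^i(\CF)\cong pr^*\CH^i(\CF)$ for every $i$, you would still need to lift this to an isomorphism of complexes $\theta(n)^*\CF\cong pr^*\CF$ in $D(\Gm\times V)$, and there can be obstructions in the Ext groups. Verdier's argument handles both points by a different route; as written, your stage two is a plan rather than a proof.
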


\begin{proof}
    The “only if” direction is proved in \textit{loc. cit.} In \textit{loc. cit.}, it is not stated that $n$ can be chosen to be prime to $p$, but the proof in fact shows this.\\
    
    For the “if” direction, just observe that $\theta(n)^*\CF\tilde{\rightarrow}pr^*\CF$ implies $\theta(n)^*\CH^i(\CF)\tilde{\rightarrow}pr^*\CH^i(\CF)$. So for each $x\in V$, $(\theta(1)^*\CH^i(\CF))|_{\Gm\X\{x\}}$ is a sheaf concentrated in degree 0 and trivialised by the cover $\Gm\X\{x\}\rightarrow \Gm\X\{x\}, \lambda\mapsto\lambda^n, p\nmid n$. So $(\theta(1)^*\CH^i(\CF))|_{\Gm\X\{x\}}$, hence $\CH^i(\CF)|_{\Gm.x}$, is necessarily a tame local system. 

\end{proof}

The “if” direction is false for $\Lambda$ rational:

\begin{example}\label{ex_fin_mon}
    Let $\Lambda=\QQl$.
    
    1) Let $\CK$ be a Kummer sheaf whose corresponding representation of the tame fundamental group $\pi_1^t(\Gm,1)$ does not factor through a finite quotient, then $\CK$ cannot be trivialised by any finite cover (it has “infinite monodromy”), hence an $n$ as in the proposition does not exist.\\
    
    2) Consider the !-extension to $\AAA^1$ of the local system $\CL$ of rank 2 concentrated in degree $-1$ on $\Gm$ corresponding to the representation $\rho: \pi_1^t(\Gm,1)\rightarrow \mathbf{GL}_2(\Lambda), t\mapsto \left[ {\begin{array}{cccc}
   1 & 1 \\
   0 & 1 \\
  \end{array} } \right]$, where $t$ is a topological generator of $\pi_1^t(\Gm,1)$. This local system also has “infinite monodromy”, and an $n$ as in the proposition does not exist.
\end{example}

Let $\Lambda$ be finite or rational, and $\CF$ be a monodromic sheaf. If there exists an $n$ as in Proposition \ref{prop_verdier}, we say $\CF$ is \underline{finite monodromic} and refer to the (multiplicatively) smallest $n$ as the \underline{twist} of $\CF$, if furthermore the twist is 1, we say $\CF$ has \underline{trivial twist}. 



\begin{lemma}\label{lem_stab_mono}
    1) Being finite monodromic is preserved under taking irreducible constituents, $\OX$, and Verdier dual $\DD$.\\
    2) Being monodromic is preserved under taking cones, irreducible constituents, $\OX$, and Verdier dual $\DD$. In particular, a sheaf is monodromic if and only if its irreducible constituents are.
\end{lemma}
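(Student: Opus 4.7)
The plan is to leverage Verdier's criterion (Proposition~\ref{prop_verdier}) for part (1), and to work through long exact sequences and perverse sub/quotient structure for part (2).

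For part (1), being finite monodromic with twist dividing $n$ is encoded by the existence of an isomorphism $\theta(n)^*\CF\cong pr^*\CF$. For a tensor product of two finite monodromic sheaves with twists $n_1,n_2$, take $n=n_1 n_2$; since pullback commutes with $\OX$, so does the tensor. For Verdier dual, note that $\theta(n)$ and $pr$ are both smooth surjective of relative dimension $1$, so $\theta(n)^!\cong\theta(n)^*[2]$ and $pr^!\cong pr^*[2]$ (Tate twists suppressed per Conventions); combining with $\DD\circ f^*\cong f^!\circ\DD$ transports the iso to $\DD\CF$. For irreducible constituents, $\theta(n)^*[1]$ and $pr^*[1]$ are perverse $t$-exact (smooth of relative dimension $1$ with geometrically connected fibers), so the iso descends to each $^p\CH^i\CF$; to pass further to a simple perverse subquotient $\CP$ of a finite monodromic perverse $\CM$, I would recast $\theta(n)^*\CM\cong pr^*\CM$ as $\phi^*pr^*\CM\cong pr^*\CM$ where $\phi(\lambda,v)=(\lambda,\lambda^n v)$ is an automorphism of $\Gm\X V$, which is essentially a weak $\Gm$-equivariance statement; since $\Gm$ is connected, such a structure passes to simple subquotients in the heart.

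For part (2), cone stability follows from the long exact sequence of cohomology sheaves restricted to each $\Gm$-orbit $O$ (either $\{0\}$ or isomorphic to $\Gm$); tame local systems on $O$ form an abelian subcategory closed under kernels, cokernels, and extensions. As perverse truncations are iterated cones, $^p\CH^i\CF$ is monodromic whenever $\CF$ is. For a simple perverse subquotient $\CP$ of monodromic perverse $\CM$, choose a $\Gm$-invariant stratification on which $\CM$ is lisse; then $\CP\cong j_{!*}(\CL[\dim Y])$ for some $\Gm$-invariant stratum $Y$ and irreducible local system $\CL$ arising as a sub/quotient of a cohomology sheaf of $\CM|_Y$, hence tame on each orbit of $Y$; tameness of the cohomology sheaves of $j_{!*}(\CL[\dim Y])$ on boundary orbits follows from the standard local construction of $j_{!*}$. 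Tensor product preservation follows from the hypertor spectral sequence: on each orbit, $\mathcal{T}or$ of tame local systems is again a tame local system. For Verdier dual, reduce via cone stability to the perverse case; dualizing $j_{!*}(\CL[\dim Y])$ yields $j_{!*}(\CL^\vee[\dim Y])$ (up to shift), and tameness of $\CL$ transfers to $\CL^\vee$. The final \emph{in particular} then follows by combining irreducible-constituent stability with iterated cone stability applied to a Jordan--H\"older filtration of perverse cohomology.

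The step I expect to need the most care is the propagation of tameness through $j_{!*}$-extension in part (2), and the descent of the equivariance-type iso to simple subquotients in part (1). The latter ultimately hinges on the fact that for a connected group such as $\Gm$, invariance of a simple perverse sheaf produces a canonical equivariant structure; the former reduces to a standard local computation of vanishing cycles / nearby cycles for $j_{!*}$.
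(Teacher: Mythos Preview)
Your treatment of part (1), and of cones and tensor products in part (2), is essentially in line with the paper. The ``$\Gm$ connected'' justification for passing to simple subquotients in part (1) amounts to the same fact the paper invokes directly via \cite[\nopp 4.2.5]{BBDG}: both $\theta(n)^*$ and $pr^*$ are (after shift) $t$-exact fully faithful functors $Perv(V)\to Perv(\Gm\times V)$ with essential image closed under subquotients, and since both restrict to the identity along $\{1\}\times V$, the induced bijection on simple constituents of $\theta(n)^*\CF\cong pr^*\CF$ is forced to match $\theta(n)^*\CP$ with $pr^*\CP$.

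The real gap is the one you flag yourself: the assertion that tameness of $\CL$ on orbits of $Y$ propagates through $j_{!*}$ to give tameness of $\CH^i(j_{!*}\CL[\dim Y])$ on boundary orbits. There is no ``standard local construction'' or nearby/vanishing-cycle computation that yields this directly; the behaviour of $j_{!*}$ on the boundary is not controlled by tameness of $\CL$ in any immediate local way. The paper sidesteps this entirely via Verdier's pointwise criterion \cite[\nopp 3.2]{verdier_specialisation_1983}: a sheaf $\CG$ in degree $0$ is monodromic iff $\theta_\lambda^*\CG\cong\CG$ for all $\lambda\in k^\times$. From $\theta_\lambda^*\CF\cong\CF$ one passes to $\theta_\lambda^*\CP\cong\CP$ for each simple constituent $\CP$ (using that $\theta_\lambda^*$ is an auto-equivalence of $Perv(V)$ and that $k^\times$, being divisible, acts trivially on the finite set of isomorphism classes of constituents), then to $\theta_\lambda^*\CH^i(\CP)\cong\CH^i(\CP)$, and the criterion closes the loop. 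Your $j_{!*}$ approach can be repaired this way---once $\theta_\lambda^*\CL\cong\CL$, commuting $\theta_\lambda^*$ past $j_{!*}$ gives what you need---but then it is essentially the paper's argument. For Verdier duality in the rational case the paper takes yet another route, reducing via Proposition~\ref{prop_basic_twist} to the finite-monodromic case already handled in part (1); your $\DD j_{!*}\CL\cong j_{!*}\CL^\vee$ argument inherits the same unproved propagation step.
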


For $\Lambda$ rational, having finite monodromic irreducible constituents does not imply the sheaf itself is finite monodromic, as Example \ref{ex_fin_mon}.2 shows.

\begin{proof}
    1) Let $\CF, \CG$ be finite monodromic sheaves, and $n>0$ in $\NN$ prime to $p$ such that there exist isomorphisms $\theta(n)^*\CF\tilde{\rightarrow}pr^*\CF$, $\theta(n)^*\CG\tilde{\rightarrow}pr^*\CG$.\\

    $\theta(n)$ and $pr$ are smooth maps with connected geometric fibres, so $\theta(n)^*, pr^*$ are perverse t-exact and embeds $Perv(V)$ into $Perv(\Gm\X V)$ as a full subcategory closed under taking subquotients (\cite[\nopp 4.2.5]{BBDG}). We may thus take irreducible constituents on both sides of $\theta(n)^*\CF\tilde{\rightarrow}pr^*\CF$ and get the analogous isomorphisms for the irreducible constituents of $\CF$. So the irreducible constituents are also finite monodromic.\\
    
    The preservation under $\OX$ and $\DD$ is easily verified: $\theta(n)^*(\CF\OX\CG)\tilde{\rightarrow}pr^*(\CF\OX\CG)$, so $\CF\OX\CG$ is finite monodromic. $\theta(n)^*\DD\CF\cong\DD\theta(n)^!\CF\tilde{\rightarrow}\DD pr^!\CF\cong pr^*\DD\CF$, so $\DD\CF$ is finite monodromic.\\
    
    2) We first show the preservation under taking cones. Let $\CF\rightarrow\CG\rightarrow\CH\rightarrow$ be a distinguished triangle, with $\CF, \CG$ monodromic. The long exact sequence associated to $\CH^i$ easily implies that $\CH^i(\CH)$ sit inside exact sequences of the form $0\rightarrow coker_i\rightarrow\CH^i(\CH)\rightarrow ker_i\rightarrow0$. Restrict to any $\Gm$-orbit $\mathcal{O}$, $coker_i$ and $ker_i$ become cokernels and kernels of map between tame local systems, so are themselves tame local systems. $\CH^i(\CH)|_{\mathcal{O}}$ are thus also tame local systems.\\

    To show the preservation under taking irreducible constituents, because of the preservation under taking cones, we may do induction on the amplitude to reduce to the case of a monodromic sheaf $\CF$ concentrated in degree 0. Recall that for a sheaf $\CG$ concentrated in degree 0, being monodromic is equivalent to $\theta_\lambda^*\CG\cong\CG$, $\forall \lambda\in k^\X$, where $\theta_\lambda: V\rightarrow V$ is the map of multiplication by $\lambda$ (\cite[\nopp 3.2]{verdier_specialisation_1983}). So $\theta_\lambda^*\CF\cong\CF$. Since $\theta_\lambda^*$ restricts to an equivalence $Perv(V)\rightarrow Perv(V)$, we may take irreducible constituents on both sides and get $\theta_\lambda^*\CP\cong\CP$, for each irreducible constituent $\CP$ of $\CF$. Further take $\CH^i$, we get $\theta_\lambda^*\CH^i(\CP)\cong\CH^i(\CP)$. By \cite[\nopp 3.2]{verdier_specialisation_1983} again, $\CH^i(\CP)$ is monodromic.\\

    We now show the preservation under $\OX$. Let $\CF, \CG$ be monodromic sheaves. Because of the preservation under taking cones, we may do induction on the amplitude to reduce to the case where $\CF, \CG$ are concentrated in degree 0. Then, for any $\Gm$ orbit $\mathcal{O}$, $\CF|_\mathcal{O}$ and $\CG|_\mathcal{O}$ are tame local systems in degree 0. It follows that $(\CF|_\mathcal{O})\OX(\CG|_\mathcal{O})$ is a tame local system. So $\CH^i(\CF\OX\CG)|_\mathcal{O}=\CH^i((\CF|_\mathcal{O})\OX(\CG|_\mathcal{O}))$ is a tame local system, $\CF\OX\CG$ is monodromic.\\

    To show the preservation under $\DD$, because of the preservation under taking cones, we may reduce to the case of perverse irreducible monodromic sheaves. The finite coefficient case is dealt with in 1). For $\Lambda$ rational, we may further assume $\Lambda=\QQbarl$ because of the easily verified fact that, for $\Lambda$ rational, $\CF$ is monodromic if and only if $\CF\OX_\Lambda \QQbarl$ is. In this case, the statement follows from Proposition \ref{prop_basic_twist} items 1) and 3), and the compatibility of the Fourier transform and linear actions of algebraic groups (\cite[\nopp 1.2.3.4]{laumon_transformation_1987}).

\end{proof}

\begin{proposition}\label{prop_basic_twist}
    1) If $\CF\in D(V)$ is perverse, then $\CF$ is (finite monodromic) with trivial twist if and only if $\CF$ is $\Gm$-equivariant. This is preserved under the Fourier transform.\\
    2) Let $\CF\in D(V)$ be perverse irreducible finite monodromic with twist $n>1$. Assume $\Lambda$ contains a primitive $n$-th root of unity\footnote{This can always be achieved by adjoining a primitive $n$-th root of unity, see Remark \ref{rmk_twist_prim-to-l}.}. Then there exists a Kummer sheaf $\CK$ on $\Gm$ (unique up to isomorphism), trivialised by the power $n$ cover of $\Gm$, such that $\theta(1)^*\CF\cong \CK\BX\CF[-1]$. Furthermore, $\theta(1)^*F\CF\cong \CK^{-1}\BX F\CF[-1]$.\\
    3) Let $\Lambda=\QQbarl$. Let $\CF\in D(V)$ be perverse irreducible monodromic with twist $n>1$. Then there exists a Kummer sheaf $\CK$ on $\Gm$ (unique up to isomorphism), trivialised by the power $n$ cover of $\Gm$, such that $\theta(1)^*\CF\cong \CK\BX\CF[-1]$. Furthermore, $\theta(1)^*F\CF\cong \CK^{-1}\BX F\CF[-1]$.\\
    4) $\CF\in D(V)$ is monodromic (resp. finite monodromic) if and only if $F\CF$ is monodromic (resp. finite monodromic). In the finite monodromic case, they have the same twist.
\end{proposition}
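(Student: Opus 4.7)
Part (1) amounts to identifying trivial twist with $\Gm$-equivariance for perverse sheaves. For the nontrivial direction, one rigidifies the given isomorphism $\theta(1)^*\CF \cong pr^*\CF$ along the identity section $\{1\} \times V \hookrightarrow \Gm \times V$, and the cocycle condition on $\Gm \times \Gm \times V$ follows from the connectedness of $\Gm$ and the rigidity of the normalized isomorphism. Preservation under Fourier follows from the compatibility of $F$ with linear $\Gm$-actions (\cite[1.2.3.4]{laumon_transformation_1987}): the scaling action on $V$ is self-dual up to inversion on the parameter $\Gm$, so applying $\mathrm{id}_\Gm \times F$ to $\theta(1)^*\CF \cong pr^*\CF$ and absorbing an inversion yields $\theta(1)^* F\CF \cong pr^* F\CF$.

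Part (2) is the main technical step, proved by Galois descent. The map $\pi_n: \Gm \times V \to \Gm \times V$, $(\lambda, v) \mapsto (\lambda^n, v)$, is \etale Galois with group $\mu_n \subset k^\X$ (using that the twist can be taken prime to $p$, by Proposition \ref{prop_verdier}). From $\theta(1) \circ \pi_n = \theta(n)$ and $\theta(n)^*\CF \cong pr^*\CF$, we see $\theta(1)^*\CF$ is the Galois descent of $pr^*\CF$ along a $\mu_n$-equivariant structure. Because $\CF$ is perverse irreducible and $\Lambda$ contains a primitive $n$-th root of unity, Schur's lemma identifies this structure with a character $\chi: \mu_n \to \Lambda^\X$, and the minimality of the twist forces $\chi$ to have order exactly $n$. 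Descent along $\pi_n$ by $\chi$ is external product with the Kummer sheaf $\CK$ attached to $\chi$, yielding $\theta(1)^*\CF \cong \CK \BX \CF[-1]$ (the shift reflecting the convention that $\CK|_{\Gm}$ sits in degree $-1$). For the Fourier claim, I would apply $\mathrm{id}_\Gm \times F$ to this isomorphism and use $F\theta(\lambda)^* \cong \theta(\lambda^{-1})^* F$ to invert the Kummer character, producing $\CK^{-1}$.

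Part (3) is the same proof as Part (2); the primitive root of unity hypothesis is automatic for $\Lambda = \QQbarl$. For Part (4), the finite monodromic case is again the Fourier–scaling argument of Part (1) applied to $\theta(n)$: the equation $\theta(n)^*\CF \cong pr^*\CF$ Fourier-transforms to $\theta(n)^* F\CF \cong pr^* F\CF$, and Fourier inversion (combined with the same argument applied to $F\CF$) gives equality of twists. The monodromic case uses Verdier's characterisation (\cite[3.2]{verdier_specialisation_1983}) that each $\CH^i(\CF)$ is monodromic iff $\theta(\lambda)^*\CH^i(\CF) \cong \CH^i(\CF)$ for every $\lambda \in k^\X$; combined with the perverse $t$-exactness of $F$ (which commutes with taking $\CH^i$ up to passing to $^p\CH^i$) and the Fourier–scaling intertwining, this transports monodromicity across $F$, with Fourier inversion giving the reverse direction.

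The main obstacle I anticipate is the careful shift bookkeeping in Part (2): tracking how the $\mu_n$-character from descent corresponds to the Kummer sheaf $\CK$ placed in cohomological degree $-1$, so that the decomposition comes out exactly as $\CK \BX \CF[-1]$ with no stray shifts. Uniqueness of $\CK$ will then follow from Schur's lemma applied to any two such decompositions.
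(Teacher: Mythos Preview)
Your approach to Part~(2) via Galois descent along $e(n)\times\mathrm{id}$ is correct and arguably cleaner than the paper's. The paper instead writes $\CF=j_{!*}\CL$ for an irreducible local system $\CL$ on a smooth conic locally closed $S\subset\mathring V$, shows that the $\pi_1(\Gm\times S)$-representation underlying $\theta(1)^*\CL|_{\Gm\times S}$ factors through $\ZZ/n\times\pi_1(S)$, and then invokes an elementary lemma (Lemma~\ref{lem_rep_decomp}) to split off a one-dimensional character of $\ZZ/n$; only afterwards does it extend back via $j_{!*}$. Your argument bypasses the passage to the open stratum and works directly with the perverse sheaf, using that $pr^*$ is fully faithful on $Perv(V)$ so that Schur applies to $\mathrm{Aut}(pr^*\CF)$.

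There is, however, a genuine gap in Part~(3). The point of stating (3) separately is that over $\QQbarl$ a perverse irreducible monodromic sheaf need not be \emph{finite} monodromic: a Kummer sheaf attached to a tame character of infinite order (Example~\ref{ex_fin_mon}.1) is monodromic but admits no $n$ with $\theta(n)^*\CF\cong pr^*\CF$. Your descent argument requires exactly such an $n$, so it does not apply. (The phrase ``with twist $n>1$'' in the statement of (3) is misleading; read it as ``not $\Gm$-equivariant''.) The paper's proof of (3) is genuinely different: one passes to $\CL$ on $S$, fixes a torsion-free integral model of $\theta(1)^*\CL|_{\Gm\times S}$, observes that each mod-$\ell^r$ reduction factors through some $\ZZ/m\times\pi_1(S)$ with varying $m$, and in the limit concludes that the continuous $\QQbarl$-representation factors through $\pi_1^t(\Gm)\times\pi_1(S)$; Schur over the algebraically closed field then splits off a tame rank-one character.

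This gap propagates to Part~(4). For the monodromic (non-finite) case, your route via Verdier's pointwise criterion $\theta_\lambda^*\CH^i(\CF)\cong\CH^i(\CF)$ does not close: this yields isomorphisms only of the standard cohomology sheaves, not $\theta_\lambda^*\CF\cong\CF$ in $D(V)$, and $F$ is $t$-exact for the perverse, not the standard, $t$-structure, so you cannot transport the condition across $F$ as written. The paper instead reduces to $\QQbarl$, passes to perverse irreducible constituents via Lemma~\ref{lem_stab_mono}, and applies Part~(3): the formula $\theta(1)^*F\CP\cong\CK^{-1}\BX F\CP[-1]$ immediately shows that each $\CH^i(F\CP)$ restricts to a tame local system on every $\Gm$-orbit.
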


Note that in situations 2) and 3), the restriction of $\CF$ to any $\Gm$-orbit not equal to \{0\} is of the form $\mathcal{C}\OX\CK$, for some \emph{constant} sheaf $\mathcal{C}$ (depending on the orbit). We also say that $\CK$ is the \underline{twist} of $\CF$.

\begin{proof}
    1) The first statement follows from the characterisation of perverse sheaves being equivariant under actions of connected algebraic groups (c.f. \cite[\nopp 6.2.17]{achar_perverse_2021}). The second statement follows from the compatibility of the Fourier transform and linear actions of algebraic groups.\\
    
    2) Being perverse irreducible, $\CF$ is of the form $j_{!*}\CL$ for some irreducible local system $\CL$ on some smooth irreducible locally closed conic subvariety $S\hookrightarrow \mathring{V}$.\footnote{Proof that $S$ can be chosen to be conic (note the proof only requires $\CF$ being perverse irreducible monodromic): assume $\CF$ is not a local system, let $D$ be its ramification divisor. We show $D$ is conic. Let $x$ be any closed point of $D$. Then, for some $i$, $\CH^i(\CF)$ is a local system near $x$. But $\CH^i(\CF)$ is monodromic, hence isomorphic to itself under the pullback by the $\lambda$-scaling, $\forall \lambda\in k^\X$ (\cite[\nopp 3.2]{verdier_specialisation_1983}), so $\CH^i(\CF)$ is not a local system near $\lambda.x, \forall \lambda\in k^\X$. This forces $D$ to be conic, as $D$ is a divisor.} The restriction of $\theta(n)^*\CF\tilde{\rightarrow}pr^*\CF$ to $\Gm\X S$ gives $\theta(n)|_{\Gm\X S}^*\CL\tilde{\rightarrow}pr|_{\Gm\X S}^*\CL$.\\

    \underline{Claim}: $\theta(1)|_{\Gm\X S}^*\CL$ is isomorphic to $\CK\BX\CL[-1]$ for some Kummer sheaf $\CK$ on $\Gm$ (necessarily unique up to isomorphism), trivialised by the power $n$ cover of $\Gm$.\\

    Accepting this claim, the conclusions follow: using the well-known characterisation of $j_{!*}$ (c.f. \cite[\nopp 3.3.4]{achar_perverse_2021}), it is easily seen that $\theta(1)^*\CF\cong\CK\BX pr^*\CF[-1]$. In fact, $\theta(1)^*\CF\cong\theta(1)^*(j_{!*}\CL)\cong j_{!*}(\theta(1)^*\CL)\cong j_{!*}(\CK\BX pr^*\CL[-1])$. Apply 3.3.4 in \textit{loc. cit.}, we get $\CK\BX j_{!*}pr^*\CL[-1]$ is the middle extension of $\CK\BX pr^*\CL[-1]$, hence isomorphic to $\theta(1)^*\CF$. The last statement follows from the compatibility of the Fourier transform and linear actions of algebraic groups.\\

    It remains to prove the claim. Denote $e(n): \Gm\rightarrow\Gm, \lambda\mapsto \lambda^n$. Consider $\CL':=(e(n)\X id)_*(e(n)\X id)^*\theta(1)|_{\Gm\X S}^*\CL$. Since $e(n)\X id$ is finite \etale, $\CL'$ is a local system concentrated in a single degree. Denote its corresponding $\pi_1(\Gm\X S)$ (we omit the base points in the notation from here on) representation by $\rho': \pi_1(\Gm\X S)\rightarrow \mathrm{Aut}_\Lambda(L')$. $\rho'$ factors through $\pi_1(\Gm\X S)\rightarrow\pi_1^t(\Gm)\X\pi_1(S)\rightarrow \ZZ/n\X\pi_1(S)$. The adjunction $id\rightarrow(e(n)\X id)_*(e(n)\X id)^*$ realises $\theta(1)|_{\Gm\X S}^*\CL$ as a sub-local-system of $\CL'$. Its corresponding representation $\rho: \pi_1(\Gm\X S)\rightarrow \mathrm{Aut}_\Lambda(L_1)$ thus also factors through $\ZZ/n\X\pi_1(S)$. Since $\theta(1)|_{\Gm\X S}^*\CL$ is irreducible, $\rho$ is irreducible. By our assumption on $\Lambda$, we can apply Lemma \ref{lem_rep_decomp} case 1) and get $L_1\cong K\BX L$ as $\ZZ/n\X\pi_1(S)$ representations, for some 1-dimensional representation $K$ of $\ZZ/n$. Consequently $\theta(1)|_{\Gm\X S}^*\CL\cong \CK\BX\CG$, for some Kummer sheaf $\CK$ and some sheaf $\CG$. Looking at the restriction of $\theta(1)|_{\Gm\X S}^*\CL$ to $1\X S\hookrightarrow \Gm\X S$, we see $\CG$ is necessarily isomorphic to $\CL[-1]$. $\CK$ is clearly trivialised by the power $n$ cover of $\Gm$.\\

    3) The argument is similar to 2), we indicate the differences. Consider $\theta(1)|_{\Gm\X S}^*\CL$ as above. Fix a torsion free integral model for $\theta(1)|_{\Gm\X S}^*\CL$. For each of its reductions mod $\ell^r$, the corresponding $\pi_1(\Gm\X S)$-representation (over $\ZZ/\ell^r$) factors through $\ZZ/m\X\pi_1(S)$ for varying $m$. Take the limit over $r$, we get that the (continuous) $\pi_1(\Gm\X S)$-representation over $\QQbarl$ corresponding to $\theta(1)|_{\Gm\X S}^*\CL$ factors through $\pi^t_1(\Gm)\X\pi_1(S)$. It is necessarily irreducible. Apply Lemma \ref{lem_rep_decomp} case 2) (easily modified to take continuity into account), we get an external product decomposition of $\theta(1)|_{\Gm\X S}^*\CL$. The rest is similar.\\

    4) The statements concerning the finite monodromic case follow from the compatibility of the Fourier transform and linear actions of algebraic groups, and the fact that being finite monodromic implies $a^*\CF\cong\CF$, where $a$ is the antipodal map. The statement concerning the monodromic ($\Lambda$ rational) case follows from 3) above, Lemma \ref{lem_stab_mono}.1, and the easily verified fact that, for $\Lambda$ rational, $\CF$ is monodromic if and only if $\CF\OX_\Lambda \QQbarl$ is.    

\end{proof}

\begin{lemma}\label{lem_rep_decomp}
    Let $H$ be an abelian group, $G$ be any group, $\Lambda$ be a field. Assume either 1) $H=\ZZ/n$ for $n>1$ in $\NN$, and $\Lambda$ contains a primitive $n$-th root of unity, or 2) $\Lambda$ is algebraically closed. Then, for any finite dimensional irreducible $\Lambda$-representation $M$ of $H\X G$, there exist irreducible $\Lambda$-representations $K$ of $H$ and $L$ of $G$ and an isomorphism $K\BX L\cong M$ as representations of $H\X G$. Note, necessarily, $\mathrm{dim}_\Lambda(K)=1$, $\mathrm{dim}_\Lambda(L)=\mathrm{dim}_\Lambda(M)$.
\end{lemma}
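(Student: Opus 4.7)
The plan is to reduce both cases to the same intermediate claim: that $H$ acts on $M$ by scalars via a single character $\chi: H \rightarrow \Lambda^\X$. Once this is established, I take $K$ to be the one-dimensional $H$-representation afforded by $\chi$ and $L$ to be $M$ viewed purely as a $G$-representation via $\rho_G$. The identity map on the underlying vector space is then tautologically an isomorphism $K \BX L \cong M$ of $H \X G$-representations. The irreducibility of $L$ as a $G$-representation follows at once: any nonzero $G$-stable subspace $N \subseteq L$ is automatically $H$-stable (since $H$ acts by scalars on the whole of $M$), hence an $H \X G$-subrepresentation, so $N = M$ by irreducibility of $M$. The dimension assertions fall out immediately.

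In case 2), where $\Lambda$ is algebraically closed, I will invoke Schur's lemma directly. For each $h \in H$, since $H$ is abelian and $H \X G$ is a direct product, $\rho(h)$ commutes with $\rho_H(H)$ and with $\rho_G(G)$, so $\rho(h) \in \mathrm{End}_{H \X G}(M)$. Since $M$ is finite-dimensional and irreducible over the algebraically closed field $\Lambda$, Schur gives $\mathrm{End}_{H \X G}(M) = \Lambda$, so each $\rho(h)$ is a scalar $\chi(h) \cdot \mathrm{id}$. The map $h \mapsto \chi(h)$ is manifestly a character.

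In case 1), $\Lambda$ need not be algebraically closed, so Schur's lemma is unavailable, and I will instead diagonalise the $H$-action using the root-of-unity hypothesis. Let $h$ be a generator of $H = \ZZ/n$, so that $\rho(h)^n = \mathrm{id}$ and the minimal polynomial of $\rho(h)$ divides $x^n - 1$. The assumption that $\Lambda$ contains a primitive $n$-th root of unity forces $\mathrm{char}(\Lambda) \nmid n$, hence $x^n - 1$ splits into $n$ distinct linear factors over $\Lambda$. Therefore $\rho(h)$ is diagonalisable over $\Lambda$, yielding a decomposition $M = \bigoplus_\chi M_\chi$ with $M_\chi := \{m \in M : \rho(h)m = \chi(h)m\}$ ranging over the characters $\chi : H \rightarrow \Lambda^\X$. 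Each $M_\chi$ is preserved by $\rho_G$ (since $\rho(h)$ commutes with $\rho_G$), so each $M_\chi$ is an $H \X G$-subrepresentation. Irreducibility of $M$ then forces $M = M_\chi$ for a unique $\chi$, completing the reduction.

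I do not anticipate a real obstacle here; the only subtlety is recognising that the hypothesis of case 1) on roots of unity serves precisely to substitute an eigenspace decomposition for the Schur's lemma argument used in case 2), and that once $H$ acts by scalars the external product decomposition and irreducibility of $L$ are formal.
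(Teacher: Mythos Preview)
Your proof is correct and follows essentially the same approach as the paper's. In case~2) both arguments use Schur's lemma to see that $H$ acts by scalars; in case~1) the paper invokes semisimplicity of $\ZZ/n$-representations to get the isotypic decomposition $M=\bigoplus K_i^{r_i}$, while you directly diagonalise the generator $\rho(h)$ via the root-of-unity hypothesis---these are the same decomposition, just phrased differently, and your explicit check that $L$ is irreducible is a detail the paper leaves implicit.
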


Here $\BX$ denotes the external tensor product of group representations. It is also denoted by $\OX$ in the literature.

\begin{proof}
    In case 1), all finite dimensional representations of $\ZZ/n$ are semisimple (note that $n$ is necessarily invertible in $\Lambda$), and irreducible ones are 1-dimensional. View $M$ as a representation of $\ZZ/n=\ZZ/n\X\{1\}$, it decomposes as $M=\oplus_i K_i^{\oplus r_i}$, for some 1-dimensional representations $K_i$ of $\ZZ/n$, and $r_i>1$. Since the $G=\{1\}\X G$ action commutes with the $\ZZ/n$ action, each $K_i^{\oplus r_i}$ is a sub-representation of $\ZZ/n\X G$. By the irreducibility of $M$, there is only one of them. Denote it by $M=K^r$. View $M$ as a representation of $G$, and denote it by $L$, then clearly $M\cong K\BX L$ as representations of $\ZZ/n\X G$.\\

    In case 2), since $H\X\{1\}$ and $\{1\}\X G$ commute, and $H$ is abelian, the homomorphism $H\X\{1\}\rightarrow End_\Lambda M$ lands in $End_{H\X G-rep}M$. By Schur's Lemma, $End_{H\X G-rep}M\cong\Lambda$. We see that each element of $H\X\{1\}$ must act through scaling. Denote the corresponding 1-dimensional representation of $H$ by $K$, and $M$ regarded as a $G$-representation (clearly irreducible) by $L$. Then $M\cong K\BX L$ as representations of $H\X G$.

\end{proof}

\begin{remark}\label{rmk_twist_prim-to-l}
    We own the following observation to Beilinson: for $\Lambda$ finite, and $\CF\in D(V)$ a perverse irreducible monodromic sheaf, the twist $n$ is always prime to $\ell$. Proof: using the same notations as in the fourth paragraph of the proof of Proposition \ref{prop_basic_twist}.2, we claim the representation $\rho: \ZZ/n\X\pi_1(S)\rightarrow \mathrm{Aut}_\Lambda L_1$ must factor through $((\ZZ/n)/\{\ell-torsion\})\X\pi_1(S)$. Since it follows easily from the definition of the twist that $n$ is the smallest positive integer for which a factorisation $\pi_1(\Gm\X S)\rightarrow\ZZ/n\X\pi_1(S)\rightarrow \mathrm{Aut}_\Lambda L_1$ exists, $n$ must be prime to $\ell$. To see the claim, note that $\ZZ/n\X\{1\}$ is in the centre of $\ZZ/n\X\pi_1(S)$, so it lands in $End_{\ZZ/n\X\pi_1(S)-rep}L_1$. As $\rho$ is irreducible, the latter is a division algebra over $\Lambda$ by Schur's Lemma. If $m\in\ZZ/n\X\{1\}$ is $\ell$-torsion, say $\ell^rm=0$, then $\rho(\ell^rm)=\rho(m)^{\ell^r}=id$, so $\rho(m)^{\ell^r}-id=(\rho(m)-id)^{\ell^r}=0$, so $\rho(m)=id$.
\end{remark}

\begin{proposition}\label{prop_loc_decomp}
    Let $\CF\in D(V)$ be perverse irreducible monodromic, with non-trivial twist $\CK$. Assume either 1) $\CF$ is finite monodromic with twist $n$ and $\Lambda$ contains a primitive $n$-th root of unity, or 2) $\Lambda=\QQbarl$. Let ($W,\sigma$) be the data of an open conic subvariety $W$ of $\mathring{V}:=V-\{0\}$ together with a section $\sigma$ of the projection $q: \mathring{V}\rightarrow \PP(V)$ (restricted to $W$). Then, $\CF|_W\cong \underline{\CF}_\sigma\BX\CK$ for some perverse irreducible sheaf $\underline{\CF}_\sigma$ on $\PP(V)$ (unique up to isomorphism).
\end{proposition}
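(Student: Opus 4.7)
The plan is to reduce to the $\Gm$-twisted equivariance statement of Proposition \ref{prop_basic_twist} items 2) and 3), which provide an isomorphism $\theta(1)^*\CF \cong \CK \BX \CF[-1]$ on $\Gm\X V$. Since $W$ is conic and $\sigma$ is a section of $q|_W: W \to U := q(W)$, the $\Gm$-torsor $q: \mathring{V} \to \PP(V)$ is trivialised over $U$ by $\sigma$: the map $\phi: U\X\Gm \to W$, $(u,\lambda) \mapsto \lambda\,\sigma(u)$, is a scheme-theoretic isomorphism, and it coincides with the composition $U\X\Gm \xrightarrow{\sigma\X\mathrm{id}} V\X\Gm \xrightarrow{\mathrm{sw}} \Gm\X V \xrightarrow{\theta(1)} V$ (which factors through $W$).

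Pulling the isomorphism of Proposition \ref{prop_basic_twist} back along $\sigma\X\mathrm{id}$ and swapping factors yields $\phi^*(\CF|_W) \cong \sigma^*\CF[-1] \BX \CK$ on $U\X\Gm$; since $\phi$ is an isomorphism, this identifies $\CF|_W$ with $\sigma^*\CF[-1] \BX \CK$ under $W \cong U\X\Gm$. Set $\underline{\CF}_\sigma^U := \sigma^*\CF[-1] \in D(U)$. Because $\CK$, viewed on $\Gm$ in degree $-1$, is a perverse irreducible sheaf, and because $\CF|_W$ is perverse irreducible on $W$ (assuming $W$ meets the support of $\CF$; otherwise the statement is vacuous, as the support of $\CF$ is $\Gm$-stable and contained in $\mathring{V}$ by the non-triviality of the twist), the external product decomposition $\CF|_W \cong \underline{\CF}_\sigma^U \BX \CK$ forces $\underline{\CF}_\sigma^U$ to be perverse irreducible on $U$.

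It remains to extend $\underline{\CF}_\sigma^U$ to a perverse irreducible sheaf on the whole of $\PP(V)$. The plan is to use the standard classification of perverse irreducibles: write $\underline{\CF}_\sigma^U \cong (j^U)_{!*}(\CM)$ for an irreducible local system $\CM$ on some smooth locally closed subvariety $T \hookrightarrow U$, and then define $\underline{\CF}_\sigma := j_{!*}(\CM)$ for the inclusion $j: T \hookrightarrow \PP(V)$. This gives existence. Uniqueness up to isomorphism is then immediate: any two perverse irreducible sheaves on $\PP(V)$ with isomorphic nonzero restrictions to the open subset $U$ must be isomorphic, since a perverse irreducible is determined by the pair (smooth locally closed stratum, irreducible local system on it).

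The main technical point deserving care is the t-exactness and simplicity-preservation of the external product with the perverse irreducible $\CK$ on $\Gm$, i.e.\ that $- \BX \CK$ embeds $\mathrm{Perv}(U)$ fully faithfully into $\mathrm{Perv}(U\X\Gm)$ and preserves simples. I would deduce this from standard facts about external products of perverse sheaves, together with the fact that $\CK$ restricted to $\Gm$ is a rank-one local system concentrated in perverse degree zero (after shift).
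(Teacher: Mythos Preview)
Your argument is correct, and it takes a genuinely different route from the paper's. The paper first writes $\CF=j_{!*}\CL$ for an irreducible local system $\CL$ on a conic stratum $S$, then on $W\cap S$ it \emph{untwists} the local system by tensoring with $pr_2^*\CK^{-1}$ and invokes the descent Lemma~\ref{lem_product_decomp} (the canonical map $\CF\to f^*\CH^{2d}(f_!\CF)$) to obtain $\CL|_{W\cap S}\cong\CL'\BX\CK$; finally it applies the characterisation of $j_{!*}$ to pass from the local system to the perverse sheaf. You instead work directly with the perverse sheaf, pulling back the already-established global isomorphism $\theta(1)^*\CF\cong\CK\BX\CF[-1]$ of Proposition~\ref{prop_basic_twist} along $(\sigma\X\mathrm{id})$, which immediately yields the closed formula $\underline{\CF}_\sigma|_U\cong\sigma^*\CF[-1]$. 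Your route is shorter and more conceptual, and it makes the dependence of $\underline{\CF}_\sigma$ on $\sigma$ completely explicit; the paper's route is more hands-on and does not cite the full strength of Proposition~\ref{prop_basic_twist} (only the remark after it that $\CF$ restricted to each orbit is $\mathrm{constant}\otimes\CK$). For the last technical point you flag: since $\CK|_{\Gm}$ is a rank-one local system placed in degree $-1$, the functor $-\BX\CK$ factors as $pr_1^*[1]$ followed by tensoring with a rank-one lisse sheaf in degree $0$; the first is fully faithful t-exact with essential image closed under subquotients (\cite[\nopp 4.2.5]{BBDG}) and the second is a t-exact auto-equivalence, so $-\BX\CK$ indeed \emph{reflects} perversity and simplicity, which is what you need to conclude that $\sigma^*\CF[-1]$ is perverse irreducible.
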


Here, $\sigma$ determines an isomorphism $W\cong_\sigma\underline{W}\X\Gm$, and the $\BX$ is with respect to this isomorphism. We emphasise that $\underline{\CF}_\sigma$ depends on $\sigma$.

\begin{proof}
    $\CF$ is of the form $j_{!*}\CL$ for some irreducible local system $\CL$ on some smooth irreducible locally closed conic subvariety $S\hookrightarrow \mathring{V}$, $\CF|_W\cong j_{!*}(\CL|_{W\cap S})$. Consider the sheaf $(\CL|_{W\cap S})\OX pr_2^*\CK^{-1}$, where $pr_2: W\cong_\sigma\underline{W}\X\Gm\rightarrow\Gm$ is the second projection. By the comment after the statement of Proposition \ref{prop_basic_twist}, $(\CL|_{W\cap S})\OX pr_2^*\CK^{-1}$ is a local system concentrated in a single degree and constant on each closed fibre of the projection $pr_1: \underline{W\cap S}\X\Gm\rightarrow\underline{W\cap S}$. Apply Lemma \ref{lem_product_decomp}, we get $(\CL|_{W\cap S})\OX pr_2^*\CK^{-1}\cong pr_1^*\CL'[2]$ for some (perverse) local system $\CL'$ on $\underline{W\cap S}$. (In fact $\CL'$ must be isomorphic to $((\CL|_{W\cap S})\OX pr_2^*\CK^{-1})|_{\underline{W\cap S}\X \{1\}}[-2]$.) So $\CL|_{W\cap S}\cong \CL'\BX\CK$. Reasoning as in the third paragraph in proof of Proposition \ref{prop_basic_twist}.2, the well-known characterisation of $j_{!*}$ implies $\CF|_W\cong j_{!*}(\CL|_{W\cap S})\cong j_{!*}(\CL'\BX\CK)\cong (j_{!*}\CL')\BX\CK=:\underline{\CF}_\sigma\BX\CK$.
    
\end{proof}

\begin{lemma}\label{lem_product_decomp}
    Let $f: X\rightarrow Y$ be a smooth morphism between varieties of relative dimension $d$, with geometrically connected fibres. If $\CF$ is a sheaf on $X$ concentrated in degree 0, such that for each closed point $y\in Y$, there exists an isomorphism $\CF|_{X_y}\cong\underline{\Lambda}^r$, for some $r_i\in\NN$ independent of $y$. Then the canonical map $\CF\rightarrow f^*\CH^{2d}(f_!\CF)$ is an isomorphism. If $\CF$ is perverse, then $\CH^{2d}(f_!\CF)$ is the unique (up to isomorphism) sheaf on $Y$ (necessarily perverse and concentrated in degree 0) whose pullback is isomorphic to $\CF$.
\end{lemma}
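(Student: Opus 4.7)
The plan is to construct $\CG := \CH^{2d}(f_!\CF)$ as an explicit descent of $\CF$ along $f$, verify that the canonical map $\phi\colon \CF \to f^*\CG$ is an isomorphism by a stalk-wise check at closed points of $X$, and finally handle the uniqueness clause in the perverse case via full faithfulness of $f^*$ (appropriately shifted) on perverse sheaves.

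To define $\phi$: since $\CF$ is in degree $0$ and the fibers of $f$ have dimension $d$, the complex $f_!\CF$ is cohomologically concentrated in degrees $[0,2d]$, so there is a canonical truncation $f_!\CF \to \CG[-2d]$. Combining the $(f_!,f^!)$-adjunction with the purity isomorphism $f^! \cong f^*[2d]$ (Tate twists suppressed as per the conventions) yields $\phi\colon \CF \to f^*\CG$.

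Main step (stalk check). By proper base change, $(f_!\CF)_y \cong R\Gamma_c(X_y, \CF|_{X_y})$ for every closed $y \in Y$; the hypothesis $\CF|_{X_y} \cong \underline{\Lambda}^r$, combined with the fact that $X_y$ is smooth geometrically connected of dimension $d$ (so $H^{2d}_c(X_y, \underline{\Lambda}) \cong \Lambda$), gives $\CG_y \cong \Lambda^r$. For a closed point $x \in X_y$ one has $\CF_x \cong \Lambda^r$ from the constant-sheaf identification. Unwinding the $(f_!, f^!)$-adjunction, the stalk map $\phi_x\colon \CF_x \to (f^*\CG)_x = \CG_y$ is the composition of the evaluation isomorphism $H^0(X_y, \underline{\Lambda}^r) \cong \CF_x$ with the fundamental-class (Poincar\'e-duality) isomorphism $H^0(X_y, \underline{\Lambda}^r) \cong H^{2d}_c(X_y, \underline{\Lambda}^r)$; in particular $\phi_x$ is an isomorphism at every closed $x \in X$ (note closed points of $X$ map to closed points of $Y$ since $f$ is of finite type and $k$ is algebraically closed, so the hypothesis applies). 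As $\CF$ and $f^*\CG$ are constructible complexes on a variety over the algebraically closed field $k$, a pointwise isomorphism at all closed points implies $\phi$ is a global isomorphism in $D(X)$.

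For the perverse case, uniqueness of $\CG$ follows from the full faithfulness of $f^*[d]\colon \mathrm{Perv}(Y) \hookrightarrow \mathrm{Perv}(X)$ for smooth $f$ with geometrically connected fibres (a standard fact, already invoked in the proof of Lemma \ref{lem_stab_mono}.1), while the claim that $\CG$ is perverse concentrated in degree $0$ is forced by $f^*\CG \cong \CF$ being perverse together with the t-exactness and conservativity of $f^*[d]$. The main obstacle I anticipate is the stalk-wise identification of $\phi_x$ with the fundamental-class isomorphism through the adjunction; this is a routine 6-functor-formalism calculation but the Tate-twist bookkeeping and the compatibility of $(f_!,f^!)$-adjunction with proper base change must be set up with care. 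Once that identification is pinned down, the remainder of the argument is formal.
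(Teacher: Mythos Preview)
Your proposal is correct and follows essentially the same strategy as the paper (stalk check via proper base change, reduction to the constant sheaf on fibres, full faithfulness of $f^*[d]$ on perverse sheaves for uniqueness). The paper sidesteps the adjunction bookkeeping you flag by restricting the canonical map to a whole fibre $X_y$ first---proper base change identifies this restriction with the canonical map $\underline{\Lambda}_{X_y}\to p^*\CH^{2d}(p_!\underline{\Lambda}_{X_y})$ for $p\colon X_y\to\mathrm{pt}$, which is visibly an isomorphism by connectedness of $X_y$---so no explicit pointwise unwinding of the $(f_!,f^!)$-adjunction is needed.
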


Here the map $\CF\rightarrow f^*\CH^{2d}(f_!\CF)$ is obtained by taking $\CH^0$ of the adjunction map $\CF\rightarrow f^!f_!\CF$.

\begin{proof}\footnote{This proof follows the suggestion of Will Sawin in https://mathoverflow.net/questions/225468}
    It suffices to show $\CF\rightarrow f^*\CH^{2d}(f_!\CF)$ is an isomorphism for each closed point $x\in X$. This, in turn, is implied by $\CF|_{X_y}\tilde{\rightarrow} (f^*\CH^{2d}(f_!\CF))|_{X_y}\cong f^*\CH^{2d}(f_!(\CF|_{X_y}))$ for each closed point $y\in Y$, where the last isomorphism is from proper base change. Using $\CF|_{X_y}\cong\underline{\Lambda}^r$, the question reduces to showing $\underline{\Lambda}_{X_y}\tilde{\rightarrow}p^*\CH^{2d}(p_!\underline{\Lambda}_{X_y})$, which is clear (here we use the connectedness of $X_y$). The assertion when $\CF$ is perverse is a direct consequence of the fact that $f^*$ induces a fully faithful embedding of $Perv(Y)$ into $Perv(X)$ (\cite[\nopp 4.2.5]{BBDG}, here we use again the geometrically-connectedness of fibres).
    
\end{proof}

\begin{remark}
    Proposition \ref{prop_loc_decomp} can fail without the assumptions on $\Lambda$. In fact, if $\CL|_{W\cap S}$ corresponds to an irreducible representation of $\pi_1(\underline{W\cap S})\X\ZZ/n$ which cannot be written as an external tensor product (which can exist without the assumptions on $\Lambda$), then $\CL|_{W\cap S}$ cannot be written as an external tensor product. 
\end{remark}

We now turn to F-good sheaves. Recall:

\begin{definition}[F-good sheaves]
    $\CF\in D(V)$ is \underline{F-good} if for each irreducible constituent $\CP$, $CC(\CP)=CC(F\CP)$.
\end{definition}

\begin{remark}
    F-good sheaves are not necessarily monodromic. For example, let $\CL$ be a local system concentrated in degree 0 on $\Gm$, $\CF$ be the !-extension of $\CL[1]$ to $\AAA^1$. If $\CL$ is purely of slope $<1$ at $\infty$, then $\CF$ is F-good, as one can verify using Laumon’s local Fourier transforms (c.f. \cite[\nopp 2.3.1, 2.4.3]{laumon_transformation_1987}). But such an $\CF$ is not monodromic if $\CL$ is not tame.
\end{remark}

\begin{lemma}\label{lem_stab_Fgood}
    1) Being F-good is preserved under taking cones, taking irreducible constituents, and Verdier dual $\DD$.\\
    2) Let $f: W\rightarrow V$ be a linear injection (resp. surjection) between finite dimensional vector spaces, $\CF$ (resp. $\CG$) be an F-good sheaf on $W$ (resp. $V$). Then $f_*\CF$ (resp. $f^*\CG$) is F-good.
\end{lemma}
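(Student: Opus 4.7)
My plan is to prove each item by reducing to statements about irreducible constituents and then invoking standard compatibilities between $CC$, the Fourier transform, Verdier duality, and linear maps.

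For part 1), all three closure properties follow by tracking irreducible constituents. For a distinguished triangle $\CF\to\CG\to\CH\to$ with $\CF,\CG$ F-good, the long exact sequence of perverse cohomology presents each ${}^p\CH^i(\CH)$ as an extension of a subobject of ${}^p\CH^{i+1}(\CF)$ by a quotient of ${}^p\CH^i(\CG)$, so every irreducible constituent of $\CH$ is already an irreducible constituent of $\CF$ or $\CG$; F-goodness therefore propagates. Preservation under taking irreducible constituents is tautological. For Verdier duality, combine the standard identity $F\DD\cong \DD F_{\psi^{-1}}\cong \DD a^* F$ (where $a:V'\to V'$ is the antipodal map; see e.g.\ \cite[\nopp 1.2.2.1]{laumon_transformation_1987}) with Saito's invariance $CC\circ \DD = CC$. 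The irreducible constituents of $\DD\CF$ are the Verdier duals of those of $\CF$, so we reduce to showing $CC(\DD\CP)=CC(F\DD\CP)$ for each F-good irreducible $\CP$, i.e.\ $CC(\CP) = a^* CC(F\CP)$. Since $CC(F\CP) = CC(\CP)$ by hypothesis and $CC(\CP)$ is conical (invariant under the $\Gm$-action on cotangent fibres, in particular under $a$), this is immediate.

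For part 2), the two cases are parallel. For a linear injection $f:W\hookrightarrow V$ with dual surjection $f^\vee:V'\twoheadrightarrow W'$, $f_*=f_!$ is perverse t-exact and fully faithful and sends simples to simples, so the irreducible constituents of $f_*\CF$ are exactly $f_*\CP$ as $\CP$ ranges over the irreducible constituents of $\CF$. For a linear surjection $g:V\twoheadrightarrow U$ with dual injection $g^\vee:U'\hookrightarrow V'$, the shift $g^*[\dim V-\dim U]$ is perverse t-exact and fully faithful by \cite[\nopp 4.2.5]{BBDG} (the fibres of $g$ are affine spaces, hence geometrically connected), so the irreducible constituents of $g^*\CG$ are $g^*\CP[\dim V-\dim U]$ as $\CP$ ranges over the irreducible constituents of $\CG$.

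The Fourier transform intertwines these operations: $F_Vf_*\cong (f^\vee)^* F_W$ and $F_Vg^*\cong (g^\vee)_* F_U$, up to cohomological shift and Tate twist (\cite[\nopp 1.2.2.4]{laumon_transformation_1987}), which do not affect $CC$. On the cycle side, Saito's formulas (\cite{saito_characteristic_2017}) express $CC(f_*\CP)$ (closed immersion) and $CC((f^\vee)^* F\CP)$ (smooth pullback of relative dimension $\dim V-\dim W$) each as the cycle-theoretic pullback of the input along the natural surjective correspondence between cotangent bundles. Under the canonical symplectic identifications $T^*V=V\times V'\cong V'\times V=T^*V'$ and $T^*W=W\times W'\cong W'\times W=T^*W'$ (swap of factors), both correspondences are identified with the map induced by $f^\vee:V'\twoheadrightarrow W'$ on the relevant factor. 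Hence $CC(f_*\CP)$ and $CC(Ff_*\CP)$ are both pullbacks of $CC(\CP)=CC(F\CP)$ along the same correspondence and are therefore equal. The surjection case is entirely analogous, swapping the roles of closed immersion and smooth pullback. The one non-routine step is this identification of correspondences under the symplectic isomorphisms; once the diagrams are drawn carefully it is immediate, and I anticipate no serious obstacle.
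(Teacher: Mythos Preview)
Your overall strategy matches the paper's: track irreducible constituents and invoke the standard compatibilities of $CC$ and $F$ with $\DD$, closed immersions, and smooth pullbacks. The cones and irreducible-constituents parts of 1), and all of 2), are correct and essentially identical to the paper's argument (your part 2) spells out the identification of cotangent correspondences that the paper leaves implicit).

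There is, however, a genuine gap in your Verdier duality argument. The antipodal map $a:V'\to V'$ induces on $T^*V'=V'\times V$ the map $(v',\xi)\mapsto(-v',-\xi)$, acting by $-1$ on \emph{both} factors, not just on the cotangent fibre. Thus conicity of $CC(F\CP)$ in the fibre direction alone does not give the invariance you claim (``in particular under $a$'' is unjustified as written). The fix uses your own hypothesis: since $CC(\CP)=CC(F\CP)$ under the swap $V\times V'\cong V'\times V$, and each side is conical in its respective cotangent-fibre direction, the common cycle is conical in \emph{both} factors of $V\times V'$; in particular it is invariant under $(-1,-1)$, which is exactly what you need. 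With this correction your argument goes through. It is worth noting that the paper handles this step differently: it asserts $a^*\DD F\CF\cong\DD F\CF$ by invoking monodromicity of $\DD F\CF$, an assumption not present in the lemma's hypotheses. Your bi-conicity route, once patched, avoids this and works for arbitrary F-good sheaves.
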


\begin{proof}
    1) That being F-good is preserved under taking irreducible constituents is clear. If $\CF\rightarrow\CG\rightarrow\CH\rightarrow$ is a distinguished triangle, the long exact sequence associated to $^p\CH^i$ easily implies that irreducible constituents of $\CG$ is a subset of the union of irreducible constituents of $\CF$ and $\CH$. So the F-goodness of $\CF$ and $\CH$ implies the F-goodness of $\CG$. Finally, let $\CF$ be F-good, we show $\DD\CF$ is F-good: as $\DD$ is an anti-equivalence preserving $Perv(V)$, it suffices to prove $CC(\DD\CF)=CC(F\DD\CF)$ for $\CF$ perverse irreducible. Apply the formula $F\DD\cong a^*\DD F$ to $\CF$ (where $a$ is the “multiplication by $-1$” on $V$), using the monodromicity of $\DD F\CF$, we get $F\DD\CF\cong\DD F\CF$. So $CC(F\DD\CF)=CC(\DD F\CF)$. By $CC\DD=CC$ (see \cite[\nopp 5.13.4]{saito_characteristic_2017} for $\Lambda$ finite, and Proposition \ref{prop_CC_radon_rat} for $\Lambda$ rational), and use the assumption that $\CF$ is F-good, we get $CC(\DD\CF)=CC(\CF)=CC(F\CF)=CC(\DD F\CF)=CC(F\DD\CF)$.\\

    2) This follows directly from the compatibility of the Fourier transform with linear maps (\cite[\nopp 1.2.2.4]{laumon_transformation_1987} and its dual version), and the behaviour of $CC$ under closed immersions and smooth pullbacks (see \cite[\nopp 5.13.2, 5.17]{saito_characteristic_2017} for $\Lambda$ finite, the rational case follows easily from the finite case and the definition of $CC$, reviewed in the Appendix).   
    
\end{proof}

\section{The case of the trivial twist}\label{sec_triv_twist}
As above, $\Lambda$ can be either finite or rational. We prove the special case of Theorem \ref{thm_main'} where the sheaf has trivial twist, which will be the basis for the proof of the general case. In the terminology of the intuition mentioned in the Introduction, we deal with the “projective components” in this section.

\begin{proposition}\label{prop_triv_Fgood}
    If $\CF\in D(V)$ is perverse irreducible with trivial twist, then $\CF$ is F-good.
\end{proposition}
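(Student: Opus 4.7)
The plan is to use Proposition \ref{prop_basic_twist}.1 to re-interpret ``trivial twist'' as $\Gm$-equivariance, and then reduce the assertion to a projective statement that is handled by the compatibility of the Fourier transform with the Radon transform. Since $\CF$ is perverse irreducible, there are two cases. If $\mathrm{supp}\,\CF = \{0\}$, then (up to a shift) $\CF$ is the skyscraper $\Lambda_0$ and $F\CF$ is a shift of the constant sheaf on $V'$; so $CC(\CF) = [T^*_0 V]$ and $CC(F\CF) = [T^*_{V'} V']$, and these agree under the canonical identification $T^*V = V\X V' \cong V'\X V = T^*V'$. Otherwise the support of $\CF$ is the closure of a positive-dimensional $\Gm$-stable smooth locally closed conic subvariety $S \subset \mathring{V}$, and $\CF$ is the middle extension of an irreducible $\Gm$-equivariant local system on $S$.

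In this second case, $\Gm$-equivariance forces the local system on $S$ to descend along $q|_S: S \to q(S) \subset \PP(V)$, so $\CF|_{\mathring{V}} \cong q^*\underline{\CF}[1]$ for some perverse irreducible $\underline{\CF}$ on $\PP(V)$, and $\CF$ itself is the middle extension of this to $V$. I would then invoke Brylinski's comparison \cite[\nopp 9.13]{brylinski_transformations_1986}, which identifies $F\CF$ (away from $0 \in V'$) with $q'^*R(\underline{\CF})[1]$, where $R: D(\PP(V)) \to D(\PP(V'))$ is the Radon transform and $q': \mathring{V'} \to \PP(V')$ the canonical projection; $F\CF$ is then the middle extension of this to $V'$. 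Saito's theorem \cite[\nopp 7.5]{saito_characteristic_2017} asserts $CC(\underline{\CF}) = CC(R(\underline{\CF}))$ under the canonical identification $T^*\PP(V) \cong T^*\PP(V')$ coming from projective duality. Combined with the compatibility of $CC$ with the smooth pullbacks $q,q'$ and the open immersions $j: \mathring{V} \hookrightarrow V$, $j': \mathring{V'} \hookrightarrow V'$, this yields the equality of $CC(\CF)$ and $CC(F\CF)$ away from the zero sections and the zero fibres, after identifying $T^*V$ with $T^*V'$.

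The main obstacle is the bookkeeping at the origin: I must verify that the identification $T^*V \cong T^*V'$ is compatible with the projective-duality identification $T^*\PP(V) \cong T^*\PP(V')$ along the closures of the conormal cycles that meet $T^*_0 V$ and the zero section of $T^*V'$, and I must match the coefficient of $T^*_0 V$ in $CC(\CF)$ with the coefficient of $T^*_{V'} V'$ in $CC(F\CF)$. The latter coefficient equals the generic rank of $F\CF$, which by the stationary-phase picture for monodromic sheaves is computable as an Euler-characteristic invariant of $\underline{\CF}$ (essentially the Euler characteristic of $R\Gamma_c$ of a generic affine slice of $\CF$). Comparing this with the Euler-characteristic formula for the coefficient of $T^*_0V$ in $CC(\CF)$ should close the case; this comparison will at the same time produce the explicit formula for the coefficient of $T^*_0V$ promised by this section.
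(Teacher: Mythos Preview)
Your overall strategy matches the paper's: use $\Gm$-equivariance to descend $\CF|_{\mathring{V}}$ to a sheaf $\CG$ on $\PP(V)$, invoke Brylinski's Radon--Fourier comparison together with Saito's $CC(R\CG)=LCC(\CG)$, and conclude $CC(\CF)=CC(F\CF)$ away from the $0$-section and the $0$-fibre. Two points need correction.

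\textbf{An inaccuracy in the Brylinski step.} The identification $(F\CF)|_{\mathring{V}'}\cong q'^*R(\underline{\CF})[1]$ is false as stated: take $\CF=\underline{\Lambda}_V[d]$, so $F\CF$ is the skyscraper at $0'$ and $(F\CF)|_{\mathring{V}'}=0$, whereas $R(\underline{\Lambda}_{\PP(V)}[d-1])\neq 0$. What \cite[\nopp 9.13]{brylinski_transformations_1986} actually gives is $(F\tilde{\CF})|_{\mathring{V}'}\cong q'^*R\CG$ for $\tilde{\CF}:=\pi_*\tilde{q}^*\CG$, where $\pi:\tilde V\to V$ is the blowup at $0$ and $\tilde q:\tilde V\to\PP(V)$ the projection. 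Since $\CF$, $\tilde{\CF}$, and $j_!(\CF|_{\mathring{V}})$ differ only by skyscrapers at $0$, whose Fourier transforms are constant on $V'$, your conclusion about $CC$ away from the $0$-sections and $0$-fibres survives; but the argument should go through $\tilde{\CF}$ and the two distinguished triangles $\CF_!\to\CF\to\CF_0$, $\CF_!\to\tilde{\CF}\to\tilde{\CF}_0$, exactly as the paper does.

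\textbf{The real gap: the $0$-section and $0$-fibre.} Your plan to match the coefficient of $T^*_0V$ in $CC(\CF)$ with the generic rank of $F\CF$ by an Euler-characteristic computation is, as written, circular. There is no a priori Euler-characteristic formula for the coefficient of $T^*_0V$: the formula $rk_0\CF-\chi(\PP(V),\underline{\CF}[1])+\chi(H,\underline{\CF}[1])$ that you allude to is proved in the paper \emph{after} the proposition, and its proof begins with ``It follows from Corollary~\ref{cor_triv_Fgood} that the coefficient of $T^*_0V$ in $CC(\CF)$ equals $(-1)^d\cdot rk(F\CF)$'' --- i.e.\ it uses the very equality you are trying to establish. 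An independent computation of that coefficient via the Milnor formula would require controlling $\phi_\xi(\CF)_0$ for a generic linear $\xi$, which is not any easier than the original problem.

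The paper bypasses this entirely with a short trick: apply the already-proved ``away from $0$'' statement on a bigger space. Consider $i:V\hookrightarrow V\times\AAA^1$, $v\mapsto(v,0)$; then $i_*\CF$ is again perverse irreducible with trivial twist, so $CC(i_*\CF)=CC(F(i_*\CF))=CC(p'^*F\CF[1])$ away from the $0$-section and $0$-fibre of $V\times\AAA^1$. Under $i_*$ (resp.\ $p'^*$) the components $T^*_VV$ and $T^*_0V$ of $CC(\CF)$ (resp.\ $T^*_{V'}V'$ and $T^*_{0'}V'$ of $CC(F\CF)$) land \emph{away} from the $0$-section and $0$-fibre of $V\times\AAA^1$, so their coefficients are now forced to agree. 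Fourier inversion gives the remaining matching. This is the missing idea.
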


Recall that, for $\CF$ perverse, having trivial twist is equivalent to being $\Gm$-equivariant, and this is preserved under the Fourier transform (Proposition \ref{prop_basic_twist}.1). We fix some notations. Let $\pi: \tilde{V}\rightarrow V$ be the blowup of $V$ at 0, $\tilde{q}: \tilde{V}\rightarrow \PP(V)$ be the natural projection, $j$ be the inclusion $\mathring{V}\subseteq V$. We use the same letters with “ $'$ ” to denote the corresponding maps on the dual side.

\begin{proof}
    We first prove $CC(\CF)=CC(F\CF)$ away from the 0-sections and 0-fibres (i.e., with the components supported on $V\X 0$ and $0\X V'$ removed). Consider the following diagram, where each sequence is a distinguished triangle:
\[\begin{tikzcd}
	&&& {\mathcal{F}_!} \\
	&& {\mathcal{F}} && {\tilde{\mathcal{F}}} \\
	& {\mathcal{F}_0} &&&& {\tilde{\mathcal{F}}_0} \\
	{} &&&&&& {}
	\arrow[shorten <=8pt, shorten >=8pt, from=1-4, to=2-3]
	\arrow[shorten <=8pt, shorten >=8pt, from=2-3, to=3-2]
	\arrow[shorten <=8pt, shorten >=8pt, from=1-4, to=2-5]
	\arrow[shorten <=8pt, shorten >=8pt, from=2-5, to=3-6]
	\arrow[shorten <=8pt, shorten >=8pt, from=3-6, to=4-7]
	\arrow[shorten <=8pt, shorten >=8pt, from=3-2, to=4-1]
\end{tikzcd}\]
Here $\CF_!:=j_!(\mathcal{F}|_{\mathring{V}})$, $\tilde{\CF}:=\pi_*\tilde{q}^*\mathcal{F}$, and $\CF_0$ (resp. $\tilde{\CF}_0$) is the stalk of $\CF$ (resp. $\tilde{\CF}$) at 0, viewed as skyscraper sheaves. As $\CF$ is $\Gm$-equivariant, $\CF|_{\mathring{V}}$ descends to some $\CG$ on $\PP(V)$. By the compatibility of the Radon transform and the Fourier transform (\cite[\nopp 9.13]{brylinski_transformations_1986}), $(F\tilde{\CF})|_{\mathring{V}'}\cong q'^*R\CG$, where $R$ is the Radon transform on $\PP(V)$. By the smooth pullback formula for $CC$ and the compatibility of $CC$ with the Radon transform (see \cite[\nopp 7.5]{saito_characteristic_2017} for $\Lambda$ finite, and Proposition \ref{prop_CC_radon_rat}.3 for $\Lambda$ rational) this easily implies $CC(F\tilde{\CF})=CC(\CF)$ away from the 0-sections and 0-fibres. By the above diagram and its Fourier dual, $CC(\CF)=CC(\tilde{\CF})-CC(\tilde{\CF}_0)+CC(\CF_!)$, $CC(F\CF)=CC(F\tilde{\CF})-CC(F\tilde{\CF}_0)+CC(F\CF_!)$. Since the last two terms in each equality are supported on the 0-sections and 0-fibres, we conclude that $CC(\CF)=CC(F\CF)$ away from the 0-sections and 0-fibres.\\

To prove the full equality $CC(\CF)=CC(F\CF)$, consider $i: V\hookrightarrow V\X\AAA^1, v\mapsto (v,0)$ and its dual $p': (V\X\AAA^1)'\rightarrow V'$. $i_*\CF$ is still perverse irreducible with trivial twist, so, by the above paragraph, $CC(i_*\CF)=CC(Fi_*\CF)=CC(p'^*F\CF[1])$ away from the 0-sections and 0-fibres (of $V\X\AAA^1$). The 0-section in $CC(\CF)$ corresponds to the component $\{(v,adt), v\in V, a\in k\}$ in $CC(i_*\CF)$ ($t$ is the linear coordinate on $\AAA^1$), and the 0-fibre in $CC(F\CF)$ corresponds to the component $\{(0,\xi), \xi\in V\cong T^*_{0'}V'\}$ in $CC(p'^*F\CF[1])$. They are away from the 0-sections and 0-fibres (of $V\X\AAA^1$), hence equal. This proves the 0-section in $CC(\CF)$ equals the 0-fibre in $CC(F\CF)$. Apply the Fourier inversion, we get the 0-section in $CC(F\CF)$ equals the 0-fibre in $CC(\CF)$. Hence the full equality $CC(\CF)=CC(F\CF)$.

\end{proof}

\begin{corollary}
    If $\CF\in D(V)$ is such that all its irreducible constituents have trivial twists (equivalently, $\Gm$-equivariant), then $\CF$ is F-good.
\end{corollary}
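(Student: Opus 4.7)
The plan is to observe that the corollary is essentially a formal consequence of Proposition \ref{prop_triv_Fgood} together with the very definition of F-goodness, so no substantial new argument is required. The main content is the preceding proposition; the corollary just packages it.

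More precisely, I would argue as follows. By the definition of F-good, to show that $\CF$ is F-good it suffices to show that $CC(\CP) = CC(F\CP)$ for every irreducible constituent $\CP$ of $\CF$. By hypothesis, each such $\CP$ is perverse irreducible with trivial twist, so Proposition \ref{prop_triv_Fgood} applies directly to $\CP$ and gives $CC(\CP) = CC(F\CP)$. Summing (or rather, verifying constituent-by-constituent), $\CF$ is F-good.

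The parenthetical equivalence “all irreducible constituents have trivial twist $\Leftrightarrow$ $\Gm$-equivariant” is justified by the first statement of Proposition \ref{prop_basic_twist}.1, which for a perverse sheaf identifies trivial twist with $\Gm$-equivariance; applied to each perverse cohomology and then to each irreducible constituent (using that $\Gm$-equivariance is inherited by subquotients in $Perv(V)$ since the forgetful functor from the equivariant category is fully faithful on perverse objects), this yields the equivalence as stated.

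There is no real obstacle: the substantive input, namely the equality of characteristic cycles of $\CF$ and $F\CF$ in the perverse irreducible trivial twist case, has already been established in Proposition \ref{prop_triv_Fgood} via the Radon/Fourier compatibility \cite[\nopp 9.13]{brylinski_transformations_1986} combined with \cite[\nopp 7.5]{saito_characteristic_2017} and Proposition \ref{prop_CC_radon_rat}.3, together with the $i\colon V\hookrightarrow V\X\AAA^1$ trick that separates the contribution of the zero section from the rest. The corollary merely records that F-goodness, being defined constituent-wise, automatically propagates from irreducible constituents to arbitrary objects of $D(V)$ whose constituents satisfy the hypothesis.
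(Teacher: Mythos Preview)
Your argument is correct and matches the paper's approach: the paper states this corollary without proof, treating it as immediate from Proposition~\ref{prop_triv_Fgood} and the definition of F-good, which is exactly what you spell out. One minor remark: the parenthetical ``(equivalently, $\Gm$-equivariant)'' in the statement is simply recording that, for a perverse irreducible sheaf, trivial twist and $\Gm$-equivariance coincide (Proposition~\ref{prop_basic_twist}.1), so your discussion of inheritance under subquotients is not needed.
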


\begin{corollary}\label{cor_triv_Fgood}
    If $\CF\in D(V)$ is such that $\CF|_{\mathring{V}}\cong q^*\underline{\CF}$ for some $\underline{\CF}\in D(\PP(V))$, then $\CF$ is F-good.
\end{corollary}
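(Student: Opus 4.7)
The plan is to reduce to the preceding corollary via the recollement triangle
\[
j_!j^*\CF \to \CF \to i_*i^*\CF \to,
\]
where $i:\{0\}\hookrightarrow V$. By Lemma \ref{lem_stab_Fgood}.1, F-goodness is preserved under cones, so it will suffice to show that both outer terms have all irreducible constituents of trivial twist.

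For the left-hand term, the hypothesis gives $j^*\CF \cong q^*\underline{\CF}$, which is canonically $\Gm$-equivariant on $\mathring{V}$ since $q$ is a $\Gm$-torsor. As $j$ is a $\Gm$-equivariant open immersion with $\Gm$-invariant image, $j_!q^*\underline{\CF}$ is $\Gm$-equivariant on $V$. Because $\Gm$ is connected, the forgetful functor $Perv_{\Gm}(V)\to Perv(V)$ is fully faithful with essential image closed under subquotients, so every perverse cohomology sheaf of $j_!j^*\CF$, and hence every irreducible constituent, is $\Gm$-equivariant. The right-hand term $i_*i^*\CF$ is supported on the $\Gm$-fixed point $0$, so its irreducible constituents are trivially $\Gm$-equivariant.

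By Proposition \ref{prop_basic_twist}.1, a perverse irreducible sheaf is $\Gm$-equivariant exactly when it has trivial twist, so the preceding corollary applies to each outer term, and the desired conclusion follows. I do not anticipate a serious obstacle: the entire argument is a formal consequence of the special case already handled in Proposition \ref{prop_triv_Fgood}, together with the classical fact that for a connected algebraic group action the equivariance condition is preserved under perverse subquotients.
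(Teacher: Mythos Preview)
Your argument is correct and close in spirit to the paper's: both reduce to the preceding corollary by showing that the relevant irreducible constituents are $\Gm$-equivariant, and both rest on the fact (via \cite[\nopp 4.2.5]{BBDG}) that equivariance passes to perverse subquotients. The paper takes a slightly more direct route: rather than splitting $\CF$ by the recollement triangle and invoking cone-closure of F-goodness, it shows directly that every irreducible constituent $\CP$ of $\CF$ is $\Gm$-equivariant. Namely, if $\CP$ is not supported at $0$, then $\CP|_{\mathring V}$ is still perverse irreducible (since $j^*$ is t-exact and $\CP=j_{!*}\CL$ for some $\CL$ living on $\mathring V$), hence an irreducible constituent of $q^*\underline{\CF}$, hence of the form $q^*\underline{\CP}$ by \cite[\nopp 4.2.5]{BBDG} applied to $q$. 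Your detour through $j_!$ buys you not having to check that $\CP|_{\mathring V}$ remains irreducible, at the cost of one extra appeal to Lemma \ref{lem_stab_Fgood}.1.
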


\begin{proof}
    By the corollary above, it suffices to show all irreducible constituents of $\CF$ are $\Gm$-equivariant. Let $\CP$ be an irreducible constituent. If $\CP$ is supported at $\{0\}$, this is clear. If not, then $\CP$ is of the form $j_{!*}\CL$ for $\CL$ some perverse irreducible local system on some smooth irreducible subvariety in $\mathring{V}$. So $\CP|_{\mathring{V}}$ is still perverse irreducible. Since the restriction to $\mathring{V}$ is perverse t-exact, $\CP|_{\mathring{V}}$ is an irreducible constituent of $\CF|_{\mathring{V}}\cong q^*\underline{\CF}$. Since $q^*$ is also perverse t-exact and induces a fully faithful embedding of $Perv(\PP(V))$ into $Perv(\mathring{V})$ closed under taking subquotients (\cite[\nopp 4.2.5]{BBDG}), it is easily seen that irreducible constituents of $q^*\underline{\CF}$ are exactly $q^*$ of irreducible constituents of $\underline{\CF}$. So $\CP|_{\mathring{V}}\cong q^*\underline{\CP}$ for some perverse irreducible $\underline{\CP}$ on $\PP(V)$. So $\CP|_{\mathring{V}}$, hence $\CP$, is $\Gm$-equivariant. 
    
\end{proof}

\begin{remark}
    It follows from the proof that the irreducible constituents of such an $\CF$ have trivial twists, so $\CF$ is necessarily monodromic (Lemma \ref{lem_stab_mono}). Note that, for $\Lambda$ finite, $\CF$ needs not have trivial twist; for $\Lambda$ rational, $\CF$ needs not be of finite monodromy. The Fourier transform of $j_{!*}\CL$ for $\CL$ as in Example \ref{ex_fin_mon}.2 and $j: \Gm\rightarrow\AAA^1$ gives such an example ($j_{!*}\CL$ is in fact the maximal extension of $\underline{\Lambda}_{\Gm}[1]$).
\end{remark}

For a monodromic sheaf $\CF\in V$, its singular support is $\Gm$-stable\footnote{This can be seen, e.g., using Proposition \ref{prop_loc_decomp}.}. The 0-fibre $SS(\CF)\cap T^*_0V$ is either $T^*_0V$ or $\overline{SS(\CF)|_{\mathring{V}}}\cap T^*_0V$ (closure taken in $T^*V$). As an application of the above, we record a formula, applicable to perverse sheaves with trivial twists, which allows us to tell which case happens. This will not be used in the sequel.

\begin{proposition}
    Let $\CF\in D(V)$ be such that  $\CF|_{\mathring{V}}\cong q^*\underline{\CF}[1]$ for some sheaf $\underline{\CF}$ on $\PP(V)$. Then, the coefficient of $T^*_0V$ in $CC(\CF)$ equals $rk_0\CF-\chi(\PP(V),\underline{\CF}[1])+\chi(H,\underline{\CF}[1])$, where $\chi$ denotes the Euler characteristic, and $H$ is a general hyperplane on $\PP(V)$.\footnote{More precisely: there exists an open dense $U\subseteq \PP(V')$, such that $\chi(\tilde{H},\underline{\CF}[1])$, as a function of hyperplanes $\tilde{H}$ (parametrised by closed points of $\PP(V')$), is constant. $\chi(H,\underline{\CF}[1])$ is defined to be this constant.} If $\CF$ is also perverse, then $SS(\CF)\cap T^*_0V=T^*_0V$ if and only if $rk_0\CF-\chi(\PP(V),\underline{\CF}[1])+\chi(H,\underline{\CF}[1])\neq 0$. 
\end{proposition}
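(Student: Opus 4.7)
The plan is to apply Saito's Milnor (index) formula for a generic linear form $\ell \in V'$ and to exploit the $\Gm$-equivariance of $\CF|_{\mathring{V}}$ in computing the nearby cycle at $0$.

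First, I would choose $\ell \in V'$ generic, set $H := H_\ell := \{[\ell] = 0\} \subset \PP(V)$, and consider the constant-covector section $[d\ell] \colon V \to T^*V$, $v \mapsto (v,\ell)$. For any conic component $[T^*_{X_\alpha} V]$ of $CC(\CF)$ with $X_\alpha \ne \{0\}$, the fiber $T^*_{X_\alpha,0} V \subset V'$ is a proper conic subset (since $X_\alpha$ contains a line through $0$, its tangent cone at $0$ has positive dimension). So for $\ell$ in a dense open of $V'$, the point $(0,\ell)$ meets $CC(\CF)$ only along the component $T^*_0 V$, and the local intersection $(CC(\CF),[d\ell])_{T^*V,0}$ equals $m \cdot (T^*_0 V,[d\ell])_{(0,\ell)} = m$, the sought coefficient.

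Next, I would apply Saito's Milnor formula (\cite[\nopp 5.9]{saito_characteristic_2017}; the rational-coefficient case reduces to the finite one via the Appendix) to obtain $m = -\mathrm{dimtot}\, R\phi_\ell(\CF)_0$. The key computation is that $R\psi_\ell(\CF)_0$ has trivial monodromy. Indeed, the map $v \mapsto (\ell(v), \ell(v)^{-1} v)$ is a $\Gm$-equivariant isomorphism $\ell^{-1}(\Gm) \congto \Gm \times \ell^{-1}(1)$, with $\Gm$ scaling the first factor and acting trivially on the second. Since $\CF|_{\mathring V} \cong q^*\underline\CF[1]$ is $\Gm$-equivariant, under this isomorphism it becomes $\underline\Lambda_\Gm \BX \CF|_{\ell^{-1}(1)}$ (any $\Gm$-equivariant sheaf on $\Gm$ being pulled back from a point). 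Consequently $R\psi_\ell(\CF)_0 \cong R\Gamma(\ell^{-1}(1), \CF)$ has trivial Galois action, hence is tame, so $\mathrm{dimtot} = \chi$. The section $q|_{\ell^{-1}(1)} \colon \ell^{-1}(1) \congto \PP(V) \setminus H$ identifies $\CF|_{\ell^{-1}(1)}$ with $\underline\CF[1]|_{\PP(V) \setminus H}$, and additivity yields $\chi(R\psi_\ell(\CF)_0) = \chi(\PP(V),\underline\CF[1]) - \chi(H,\underline\CF[1])$. The vanishing-cycle triangle $\CF_0 \to R\psi_\ell(\CF)_0 \to R\phi_\ell(\CF)_0 \to$ then gives
\[
m = \chi(\CF_0) - \chi(R\psi_\ell(\CF)_0) = rk_0\CF - \chi(\PP(V),\underline\CF[1]) + \chi(H,\underline\CF[1]),
\]
which is the first claim.

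For the second assertion: for $\CF$ perverse, $CC(\CF)$ is effective and $SS(\CF)$ is the union of the supports of its components. When $m \ne 0$, $T^*_0 V$ is itself a component, so $SS(\CF) \cap T^*_0 V = T^*_0 V$. Conversely, when $m = 0$, any point of $SS(\CF) \cap T^*_0 V$ comes from a component $T^*_{X_\alpha} V$ with $X_\alpha \ne \{0\}$, and as observed above each $T^*_{X_\alpha,0} V$ is a proper conic subset of $V'$, so $SS(\CF) \cap T^*_0 V \ne T^*_0 V$. The main obstacle is the tameness of $R\psi_\ell(\CF)_0$: without it Saito's formula would carry Swan contributions spoiling the clean Euler-characteristic answer, and the $\Gm$-equivariant trivialisation of the family $\ell \colon \ell^{-1}(\Gm) \to \Gm$ via the above isomorphism is precisely what makes the monodromy trivial.
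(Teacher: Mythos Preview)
Your approach via Saito's Milnor formula is genuinely different from the paper's, which instead invokes the F-goodness just established (Corollary~\ref{cor_triv_Fgood}) to identify the coefficient of $T^*_0V$ with $(-1)^d\,rk(F\CF)$, and then computes that generic rank by the d\'evissage $\CF_! \to \CF \to \CF_0$, $\CF_! \to \tilde\CF \to \tilde\CF_0$ together with the Radon--Fourier compatibility $(F\tilde\CF)|_{\mathring V'}\cong q'^*R\underline\CF[1]$. Your route is more self-contained (it does not use the Fourier transform at all), while the paper's route is a nice illustration that the formula is really a shadow of $CC(\CF)=CC(F\CF)$.

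There is, however, a real gap in your argument. The trivialisation $\gamma:\ell^{-1}(\Gm)\congto\Gm\times\ell^{-1}(1)$ identifies the \emph{global} geometric generic fibre $V_{\bar\eta}$ with $\ell^{-1}(1)_{\bar\eta}$ and indeed gives $R\Gamma(V_{\bar\eta},\CF)\cong R\Gamma(\ell^{-1}(1),\CF)$ with trivial Galois action. But $R\Psi_\ell(\CF)_0$ is the cohomology of the \emph{local} Milnor fibre $(V_{(0)})_{\bar\eta}$, and the product structure over $\Gm$ says nothing about this a priori: the point $0$ does not lie in $\ell^{-1}(\Gm)$, so $\gamma$ gives no direct grip on the henselisation at $0$. (Concretely, for $V=\AAA^2$, $\CF=j_!\underline\Lambda_{V\setminus L}[1]$ with $L$ a line through $0$, one has $R\Psi_\ell(\CF)_0=0$, while $R\Gamma_c(\ell^{-1}(1),\CF)\ne 0$; so one must at least be careful about which functor one means, and in any case the deduction ``product over $\Gm$ $\Rightarrow$ local Milnor fibre $=$ global fibre'' is not automatic.) The claim you need---tameness of the monodromy on $R\Psi_\ell(\CF)_0$ and $\chi(R\Psi_\ell(\CF)_0)=\chi(\PP(V)\setminus H,\underline\CF[1])$---is true, but it requires an extra ingredient: either a contraction/hyperbolic-localisation argument (the $\Gm$-action contracts $V$ to $0$, so for the $\Gm$-equivariant sheaf $j_!q^*\underline\CF[1]$ one can pass from local to global nearby cycles), or proper base change after blowing up $0$ (where $\tilde\ell$ acquires normal crossings along $E\cup\tilde q^{-1}(H)$ and the sheaf $\tilde q^*\underline\CF$ is lisse in the fibre direction, forcing tameness). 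You should supply one of these.

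A smaller point: in your first step you assume the components of $CC(\CF)$ are conormals $T^*_{X_\alpha}V$, which is not known in positive characteristic. The fix is easy---any $d$-dimensional irreducible component $C\ne T^*_0V$ has $C\cap T^*_0V$ of dimension $<d$, so generic $\ell$ avoids it---but you should also check that $\ell$ is $SS(\CF)$-transversal on all of $\mathring V$ (not just that $(0,\ell)$ is isolated in the $0$-fibre). This follows because $SS(\CF|_{\mathring V})=q^\circ SS(\underline\CF)\subset\{(v,\xi):\xi(v)=0\}$, so $(v,\ell)\notin SS(\CF)$ whenever $\ell(v)\ne 0$, and on $\ell^{-1}(0)\setminus\{0\}$ transversality amounts to $H_\ell$ being non-characteristic for $\underline\CF$, which holds for generic $\ell$.
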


\begin{proof}
    It suffices to prove the first statement, as the second statement follows from the first and the effectivity of characteristic cycles of perverse sheaves. Denote $\mathrm{dim}V$ by $d$. It follows from Corollary \ref{cor_triv_Fgood} that the coefficient of $T^*_0V$ in $CC(\CF)$ equals $(-1)^d.rk(F\CF)$, where $rk$ denotes the generic rank. Using the same diagram and notations at the beginning of the proof of Proposition \ref{prop_triv_Fgood}, we get $rk(F\CF)=rk(F\CF_0)+rk(F\tilde{\CF})-rk(F\tilde{\CF}_0)$. Compute:\\
$rk(F\CF_0)=(-1)^d.rk_0(\CF)$;\\
$rk(F\tilde{\CF}_0)=(-1)^d.rk(\tilde{\CF}_0)=(-1)^d.\chi(\PP(V),\underline{\CF}[1])$;\\
$rk(F\tilde{\CF})=rk(q^*R\underline{\CF}[1])=rk(R\underline{\CF}[1])=(-1)^{d-2}.\chi(H,\underline{\CF}[1])$,\\
where in the last line we have used the compatibility of the Radon transform and the Fourier transform. The statement easily follows. 

\end{proof}

\section{Proof of the main theorem}\label{sec_proof}
As above, $\Lambda$ can be either finite or rational.
\begin{theorem}\label{thm_main_text}
    Monodromic sheaves are F-good.
\end{theorem}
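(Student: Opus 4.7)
The plan is to implement Beilinson's untwisting approach sketched in the introduction. First, using Lemma \ref{lem_stab_Fgood}.1 and Lemma \ref{lem_stab_mono}.2 (both F-goodness and monodromicity are preserved under cones and irreducible constituents), I reduce to the case where $\CF$ is perverse irreducible monodromic. The trivial twist case is settled by Proposition \ref{prop_triv_Fgood}, so I may assume the twist is nontrivial. Since $CC$ commutes with extension of coefficients, I may further enlarge $\Lambda$ to $\QQbarl$ in the rational case, and to a finite extension containing a primitive $n$-th root of unity in the finite case (the latter is possible because the twist $n$ is prime to $\ell$, by Remark \ref{rmk_twist_prim-to-l}). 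Then Proposition \ref{prop_basic_twist}.2--3 provides a Kummer sheaf $\CK$ on $\Gm$ realising the twist: $\theta(1)^*\CF\cong\CK\BX\CF[-1]$.

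The central construction is the sheaf $\CG:=\CF\BX j_{!*}\CK^{-1}$ on $V\X\AAA^1$, where $j:\Gm\hookrightarrow\AAA^1$. This is perverse, and $j_{!*}\CK^{-1}$ vanishes at the origin because $\CK^{-1}$ is a nontrivial Kummer. The key point is that $\CG$ is $\Gm$-equivariant for the diagonal scaling action of $\Gm$ on $V\X\AAA^1$: on the $\Gm$-stable open $V\X\Gm$ this is an orbit-wise computation in which the $\CK$-factor produced by $\theta(1)^*\CF$ cancels the $\CK^{-1}$-factor arising from the multiplicativity of the Kummer sheaf on $\Gm$ (after a careful shift-bookkeeping in $\mathrm{mult}^*\CK^{-1}\cong(\CK^{-1}\BX\CK^{-1})[-1]$). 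Since $\CG|_{V\X\Gm}$ is perverse and $\Gm$-equivariant and $\CG$ is its middle extension across the $\Gm$-stable divisor $V\X\{0\}$, $\CG$ itself is $\Gm$-equivariant on $V\X\AAA^1$. Applying Proposition \ref{prop_triv_Fgood} to each irreducible constituent of $\CG$ then yields $CC(\CG)=CC(F\CG)$.

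Next, Saito's external product formula for characteristic cycles (\cite{saito_characteristic_ext_2017}, with the rational analogue available from the Appendix) combined with the compatibility of the Fourier transform with external tensor products (\cite{laumon_transformation_1987}) converts this into the cycle identity
\[ CC(\CF)\BX CC(j_{!*}\CK^{-1}) = CC(F\CF)\BX CC(Fj_{!*}\CK^{-1}). \]
Both $CC(j_{!*}\CK^{-1})$ and $CC(Fj_{!*}\CK^{-1})$ are of the form $[T^*_{\AAA^1}\AAA^1]+c[T^*_0\AAA^1]$ with coefficient $1$ on the zero section, since $Fj_{!*}\CK^{-1}$ is itself a middle-extended Kummer sheaf (up to a Gauss-sum scalar) by Laumon's local Fourier analysis. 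Restricting the cycle equality to the open locus $\{t\neq 0,\ \tau=0\}\subset T^*\AAA^1$, where only the zero-section summand of each Kummer's $CC$ contributes, extracts the desired equality $CC(\CF)=CC(F\CF)$.

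The main obstacle is the verification in the second paragraph that $\CG$ is $\Gm$-equivariant globally on $V\X\AAA^1$, and not merely on the open stratum $V\X\Gm$. The cancellation that produces trivial twist on $V\X\Gm$ is essentially a shift-tracking exercise with the Kummer multiplicativity isomorphism, but extending this to $V\X\AAA^1$ relies on the clean behaviour of the nontrivial Kummer middle extension at the origin (equivalently, on the fact that $j_{!*}\CK^{-1}=j_!\CK^{-1}=j_*\CK^{-1}$) together with the fact that the middle extension functor is compatible with the $\Gm$-equivariant structure across a $\Gm$-stable boundary.
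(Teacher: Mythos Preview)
Your proposal is correct and follows essentially the same route as the paper's first proof (Beilinson's untwisting argument): form $\CF\BX\CK^{-1}$ on $V\times\AAA^1$, show it has trivial twist, apply the trivial-twist case, and then use $CC(\CF_1\BX\CF_2)=CC(\CF_1)\BX CC(\CF_2)$ together with $F(\CF_1\BX\CF_2)\cong F\CF_1\BX F\CF_2$ to extract $CC(\CF)=CC(F\CF)$. The only cosmetic differences are that the paper phrases the key claim as ``$(\CF\BX\CK^{-1})|_{(V\times\AAA^1)\setminus\{0\}}$ is pulled back from $\PP(V\times\AAA^1)$'' (verified via the $j_{!*}$ description of $\CF$ and Lemma~\ref{lem_product_decomp}) and then invokes Corollary~\ref{cor_triv_Fgood}, whereas you phrase it as ``$\CF\BX\CK^{-1}$ is $\Gm$-equivariant'' (verified via Proposition~\ref{prop_basic_twist} and Kummer multiplicativity) and invoke Proposition~\ref{prop_triv_Fgood} directly; since $\CF\BX j_{!*}\CK^{-1}$ is already perverse irreducible (external product of perverse irreducibles over an algebraically closed field) this is legitimate, and the two formulations are equivalent.
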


We present two proofs. The first one follows Beilinson's key idea of untwisting the sheaf after pulling back to $V\X\AAA^1$, reducing the general case to the trivial twist case.

\begin{proof}
    Since irreducible constituents of a monodromic sheaf are monodromic (Lemma \ref{lem_stab_mono}), it suffices to prove the claim for perverse irreducible monodromic sheaves. Further, since being monodromic is clearly preserved under a finite extension of the coefficient field, and characteristic cycles do not change under this extension (c.f. the discussion after Definition \ref{def_CC_rat}), we may assume that $\Lambda$ contains a primitive $n$-th root of unity\footnote{In the extended coefficient field $\Lambda'$, we will use the character $\psi': \ZZ/p\rightarrow\Lambda\hookrightarrow\Lambda'$ to define the Fourier transform, where the first arrow is the character $\psi$ for $\Lambda$. This ensures $(F_\psi\CF)\OX_\Lambda\Lambda'\cong F_{\psi'}(\CF\OX_\Lambda\Lambda')$.}, where $n$ is the twist of the sheaf in consideration. In the rational case, we may further assume $\Lambda=\QQbarl$.\\

    Let $\CF\in D(V)$ be perverse irreducible monodromic. We want to show $CC(\CF)=CC(F\CF)$. If $\CF$ has trivial twist, then it is F-good by Proposition \ref{prop_triv_Fgood}. Assume $\CF$ has non-trivial twist $\CK$. Consider $\CF\BX\CK$ on $V\X\AAA^1$. We claim that $\CF\BX\CK^{-1}$ satisfies $(\CF\BX\CK^{-1})|_{(V\X\AAA^1)-\{0\}}\cong q^*(\underline{\CF\BX\CK^{-1}})$ for some $\underline{\CF\BX\CK^{-1}}\in D(\PP(V\X\AAA^1))$, where $q: (V\X\AAA^1)-\{0\}\rightarrow\PP(V\X\AAA^1)$ is the projection. Accepting this claim, Corollary \ref{cor_triv_Fgood} implies that $CC(\CF\BX\CK^{-1})=CC(F(\CF\BX\CK^{-1}))$. As, in general, $CC(\CF_1\BX \CF_2)=CC(\CF_1)\BX CC(\CF_2)$ (for $\Lambda$ finite, see \cite[\nopp 2.2]{saito_characteristic_ext_2017}; for $\Lambda$ rational, this is verified in Proposition \ref{prop_CC_radon_rat}).4 and $F$ commutes with $\BX$ (c.f. \cite[\nopp 1.2.2.7]{laumon_transformation_1987}), it easily follows that $CC(\CF)=CC(F\CF)$.\\

    It remains to show the claim. $\CF$ is of the form $j_{!*}\CL$ for some irreducible local system $\CL$ on some smooth irreducible locally closed conic subvariety $S\hookrightarrow \mathring{V}$. So $\CF\BX\CK^{-1}\cong (j_{!*}\CL)\BX\CK^{-1}\cong j_{!*}(\CL\BX\CK^{-1})$. By our construction, $\CL\BX\CK^{-1}$ is a local system concentrated in a single degree, which is constant when restricted to each $\Gm$ orbit in $S\X\AAA^1$ (note that $\CK^{-1}$ is 0 at $\{0\}\in\AAA^1$). By Lemma \ref{lem_product_decomp}, $\CL\BX\CK^{-1}\cong q^*(\underline{\CL\BX\CK^{-1}})$, for some $\underline{\CL\BX\CK^{-1}}\in D(\PP(S\X\AAA^1))$. Then $\CF\BX\CK^{-1}\cong j_{!*}q^*(\underline{\CL\BX\CK^{-1}})$. Its restriction to $(V\X\AAA^1)-\{0\}$ is isomorphic to $q^*j_{!*}(\underline{\CL\BX\CK^{-1}})$.

\end{proof}

We now present our original proof, which uses the local decomposition (Proposition \ref{prop_loc_decomp}) and the notion of having the same wild ramification to reduce to the trivial twist case.

\begin{proof}
    As explained at the beginning of the previous proof, it suffices to consider perverse irreducible monodromic sheaves, and we may assume, in the finite case, that $\Lambda$ contains a primitive $n$-th root of unity where $n$ is the twist of the sheaf in consideration, or, in the rational case, that $\Lambda=\QQbarl$ .\\
    
    We do induction on $d=\mathrm{dim}V$. For $d=1$, there are three types of perverse irreducible monodromic sheaves: i) the rank 1 skyscraper at $\{0\}$, ii) the rank 1 constant sheaf in degree $-1$ on $V$, iii) (!-extension of) Kummer sheaves $\{\CK\}$. Their Fourier transforms are easy to compute: i') the rank 1 constant sheaf in degree $-1$ on $V'$, ii') the rank 1 skyscraper at $\{0'\}$, iii') (!-extension of) Kummer sheaves $\{\CK^{-1}\}$ (\cite[\nopp 1.4.3.2]{laumon_transformation_1987}). In each case, F-goodness can be directly verified.\\

    Now consider the case $d>1$. Let $\CF\in D(V)$ be a perverse irreducible monodromic sheaf. If $\CF$ has trivial twist, then it is F-good by Proposition \ref{prop_triv_Fgood}. Assume $\CF$ has non-trivial twist $\CK$ (recall, by our convention, $\CK$ is in degree $-1$). Fix linear coordinates $(x_1,x_2,...,x_d)$ (i.e. an isomorphism $V\cong\AAA^d=\mathrm{Spec}(k[x_1,x_2,...,x_d])$), this induces coordinates $[x_1:x_2:...:x_d]$ on $\PP(V)$. Let $D_1=\{x_1=0\}$, $U_1$ its complement in $V$. The projection $q: \mathring{V}\rightarrow\PP(V)$ maps $U_1$ to $\underline{U_1}=\{x_1\neq 0\}\subseteq\PP(V)$. Fix the section $\sigma_1: \underline{U_1}\rightarrow U_1, [x_1:x_2,...,x_d]\mapsto (1,\frac{x_2}{x_1},...,\frac{x_d}{x_1})$.\\
    
    Apply Proposition \ref{prop_loc_decomp} to $(U_1,\sigma_1)$, we get a decomposition $\CF|_{U_1}\cong\underline{\CF}_{\sigma_1}\BX\CK$. We denote the !-extension of $\underline{\CF}_{\sigma_1}$ to $\PP(V)$ by $\UF$. Then $j_!(\CF|_{U_1})\cong j_!(\underline{\CF}_{\sigma_1}\BX\CK)\cong (j_!q^*\UF)\OX pr_1^*\CK$, where $j$ denotes the inclusions into $V$ (we use the same $j$ for the inclusion from $U_1$ as well as $\mathring{V}$), and $pr: V\rightarrow \AAA^1$ denotes the projection to the first coordinate. The last isomorphism follows from the observation that the map $U_1\cong_{\sigma_1}\underline{U_1}\X\Gm$ followed by the projection to $\Gm$ coincides with the map $pr_1$ (restricted to $U_1$).\\

    We have the distinguished triangle: $j_!(\CF|_{U_1})\rightarrow\CF\rightarrow i_{1*}(\CF|_{D_1})\rightarrow$, where $i_{1*}$ is the inclusion of $D_1$ to $V$. As $\CF|_{D_1}$ is clearly monodromic, it is F-good by the induction hypothesis. Using the compatibility of the Fourier transform with linear maps between vector spaces, it is easily seen that $i_{1*}(\CF|_{D_1})$ is F-good. As being F-good is stable under taking cones (Lemma \ref{lem_stab_Fgood}), it suffices to show $j_!(\CF|_{U_1})$ is F-good, i.e., to show $CC(Fj_!(\CF|_{U_1}))=CC(j_!(\CF|_{U_1}))$. In the following, we assume $j_!(\CF|_{U_1})$ is nonzero (with non-trivial twist $\CK$).\\

    We compute: $Fj_!(\CF|_{U_1})=F((j_!q^*\UF)\OX pr_1^*\CK)=(Fj_!q^*\UF)\ast F(pr_1^*\CK)[d]$, where $-\ast-$ denotes the convolution: let $s$ be the sum map: $V'\X V'\rightarrow V', (v_1,v_2)\mapsto v_1+v_2$, then $-\ast-: D(V')\X D(V')\rightarrow D(V'), (\CG_1,\CG_2)\mapsto s_!(\CG_1\BX\CG_2)$. Further compute: $F(pr_1^*\CK)=i'_{1!}F\CK[1-d]=i'_{1!}\CK^{-1}[1-d]$, where $i'_1$ is the inclusion of the $x_1'$-axis into $V'$ (we use the dual coordinates on $V'$).\\
    
    \underline{Claim}: $(Fj_!q^*\UF)\BX (i'_{1!}\CK^{-1})$ has the same wild ramification (swr) as $(Fj_!q^*\UF)\BX (i'_{1!}\underline{\Lambda}_{\Gm}[1])$.\\

    Accepting the claim, then by Theorem \ref{thm_swr_kato} and Theorem \ref{thm_swr}, $(Fj_!q^*\UF)\ast F(pr_1^*\CK)[d]$ has the swr as $(Fj_!q^*\UF)\ast (i'_{1!}\underline{\Lambda}_{\Gm}[1])[1]$, and they have the same characteristic cycle. Now, $i'_{1!}\underline{\Lambda}_{\Gm}[1]=FFi'_{1!}\underline{\Lambda}_{\Gm}[1]=Fpr_1^*\CH[d-1]$, where $\CH:=Fi'_{1!}\underline{\Lambda}_{\Gm}[1]$ is a sheaf on $\AAA^1$ whose restriction to $\Gm$ is constant and concentrated in degree $-1$. So $(Fj_!q^*\UF)\ast (i'_{1!}\underline{\Lambda}_{\Gm}[1])=F((j_!q^*\UF)\OX pr_1^*\CH)[-1]$. Note that $(j_!q^*\UF)\OX pr_1^*\CH$ is isomorphic to $j_!q^*\UF[1]$, which is F-good by Corollary \ref{cor_triv_Fgood}. Put these together, we get the equalities $CC(Fj_!(\CF|_{U_1}))=CC(F((j_!q^*\UF)\OX pr_1^*\CH)[1-d])=CC(j_!q^*\UF[1])$. Recall that we want to show this equals $CC(j_!(\CF|_{U_1}))$.\\

    \underline{Claim}: $j_!(\CF|_{U_1})=(j_!q^*\UF)\OX pr_1^*\CK$ has the swr as $(j_!q^*\UF)\OX pr_1^*\underline{\Lambda}_{\Gm,!}[1]=j_!q^*\UF[1]$, where $\underline{\Lambda}_{\Gm,!}$ denotes the !-extended to $\AAA^1$ of the constant sheaf on $\Gm$.\\

    Accepting this claim and combining it with the previous equalities, we get the desired equality: $CC(Fj_!(\CF|_{U_1}))=CC(j_!q^*\UF[1])=CC(j_!(\CF|_{U_1}))$.\\

    It remains to prove the two claims. We prove the second claim, the first is completely analogous. We first consider the $\Lambda$ finite case. Denote $j_!q^*\UF$ by $\CA$, $\CK$ by $\CB$. Let $\overline{C}$ be a smooth proper curve, $g: C\subseteq \overline{C}$ an open dense, $f: C\rightarrow X$ a map, $s\in \overline{C}$ a closed point. We want to show $a_s(g_!f^*\CA[1])=a_s(g_!f^*(\CA\OX pr_1^*\CB))$. Recall $a_s=rk_{\overline{\eta}_s}+sw_{\overline{\eta}_s}-rk_s$. Clearly, the ranks are the same for $g_!f^*\CA[1]$ and $g_!f^*(\CA\OX pr_1^*\CB)$ (note $\CA$ is 0 along $D_1$). For the Swan conductors, observe $(\CA\OX pr_1^*\CB)_{\overline{\eta}_s}=\CA_{\overline{\eta}_s}\OX (pr_1^*\CB)_{\overline{\eta}_s}$, $(pr_1^*\CB)_{\overline{\eta}_s}$ is 0 (if $\eta_s$ is mapped to 0 by $pr_1\circ f$) or tame of rank 1 concentrated in degree $-1$ (otherwise). In both cases, $sw_{\overline{\eta}_s}(\CA\OX pr_1^*\CB)=sw_{\overline{\eta}_s}(\CA\OX pr_1^*\underline{\Lambda}_{\Gm,!}[1])=sw_{\overline{\eta}_s}(\CA[1])$. This completes the proof for the $\Lambda$ finite case. For the $\Lambda=\QQbarl$ case, it suffices to make the following changes to this paragraph: $\CA$ denotes the reduction of any integral model of $j_!q^*\UF$, and $\CB$ denotes the reduction of any torsion free integral model of $\CK$. Note, $\CB$ is then a rank 1 local system concentrated in degree $-1$ trivialised by a power $n$ cover of $\Gm$, $p\nmid n$, hence a Kummer sheaf.

\end{proof}

\begin{corollary}\label{cor_main_text}
    If $\CF\in D(V)$ is monodromic, then $CC(\CF)=CC(F\CF)$ and $SS(\CF)=SS(F\CF)$.
\end{corollary}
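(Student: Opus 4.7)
The plan is to deduce this corollary from Theorem \ref{thm_main_text} (monodromic sheaves are F-good) by a formal additivity argument, and then to extract the singular support statement as a consequence of the characteristic cycle statement. The heart of the matter is contained in Theorem \ref{thm_main_text}; the present corollary is purely bookkeeping.

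First, I would use the fact that $CC$ is additive in distinguished triangles and in short exact sequences of perverse sheaves, so that for any $\CF \in D(V)$ one can write
\[
CC(\CF) \;=\; \sum_{i\in\ZZ} (-1)^i \, CC({}^p\CH^i(\CF)) \;=\; \sum_{\CP} m(\CP,\CF)\, CC(\CP),
\]
where $\CP$ runs over isomorphism classes of perverse irreducible constituents of $\CF$ and $m(\CP,\CF) \in \ZZ$ records the Jordan--H\"older multiplicity in each perverse cohomology weighted by the sign $(-1)^i$. By \cite[\nopp 1.3.2.3]{laumon_transformation_1987}, the Fourier transform $F$ is perverse $t$-exact (up to a shift absorbed into our normalisation), and it is an equivalence of categories, so $\CP \mapsto F\CP$ is a bijection on isomorphism classes of perverse irreducibles that preserves the multiplicities $m(\CP,\CF) = m(F\CP, F\CF)$. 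This gives
\[
CC(F\CF) \;=\; \sum_{\CP} m(\CP,\CF)\, CC(F\CP).
\]

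Next, by Lemma \ref{lem_stab_mono}.2, monodromicity is preserved under taking irreducible constituents, so every $\CP$ appearing above is a perverse irreducible monodromic sheaf. By Theorem \ref{thm_main_text}, each such $\CP$ is F-good, i.e.\ $CC(\CP)=CC(F\CP)$. Substituting into the two displayed equalities yields $CC(\CF)=CC(F\CF)$, which is the first assertion.

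Finally, for the singular support, I would recall that $SS(\CF)$ is the support of $CC(\CF)$ (see \cite[\nopp 5.13]{saito_characteristic_2017} for the finite coefficient case, and the Appendix for the rational case); the equality $CC(\CF)=CC(F\CF)$ therefore gives $SS(\CF)=SS(F\CF)$ at once. I do not expect any genuine obstacle: the only substantive input is Theorem \ref{thm_main_text}, and the only step requiring minor care is the compatibility of the irreducible-constituents decomposition with $F$, which follows from $F$ being a perverse $t$-exact equivalence.
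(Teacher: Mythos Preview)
Your argument for the characteristic cycle equality is correct and matches the paper's: both reduce to perverse irreducible constituents via additivity of $CC$ and the fact that $F$ is a perverse $t$-exact equivalence (hence preserves constituents and their multiplicities), then invoke Theorem~\ref{thm_main_text}.

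There is, however, a gap in your deduction of the singular support equality. You assert that $SS(\CF)$ is the support of $CC(\CF)$ for an arbitrary $\CF\in D(V)$, but this is only true when $\CF$ is \emph{perverse}; see Proposition~\ref{prop_CC_radon_rat}.2 (and \cite[\nopp 5.17]{saito_characteristic_2017}), where the hypothesis ``perverse'' is essential. For a general complex, cancellation between contributions from different perverse cohomologies can make the support of $CC(\CF)$ strictly smaller than $SS(\CF)$: e.g.\ for $\CF=\CG\oplus\CG[1]$ with $\CG$ perverse nonzero, one has $CC(\CF)=0$ while $SS(\CF)=SS(\CG)\neq\varnothing$. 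So $CC(\CF)=CC(F\CF)$ by itself does not give $SS(\CF)=SS(F\CF)$.

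The fix is exactly what the paper does: use that $SS$ of a complex equals the \emph{union} of $SS$ of its perverse irreducible constituents, and that $F$ permutes these constituents. Since you have already shown $CC(\CP)=CC(F\CP)$ for each perverse irreducible constituent $\CP$, and since for perverse sheaves the support of $CC$ does equal $SS$, you get $SS(\CP)=SS(F\CP)$ for each $\CP$, and then take the union.
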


\begin{proof}
    The characteristic cycle (resp. singular support) of a sheaf is the sum (resp. union) of the characteristic cycles (resp. singular supports) of its irreducible constituents. The Fourier transform preserves the irreducible constituents. So it suffices to prove $CC(\CF)=CC(F\CF)$ and $SS(\CF)=SS(F\CF)$ for $\CF$ perverse irreducible monodromic. The first equality follows from the theorem above. The support of the characteristic cycle of a perverse sheaf equals its singular support (for $\Lambda$ finite, this is \cite[\nopp 5.17]{saito_characteristic_2017}; for $\Lambda$ rational, this is verified in Proposition \ref{prop_CC_radon_rat}.2), the second equality follows.

\end{proof}

\section{Appendix: review of the characteristic cycle and the notion of having the same wild ramification}
\subsection{The characteristic cycle of a sheaf with rational coefficient} We refer to \cite[\nopp §5]{umezaki_characteristic_2020} and \cite{zheng_six_2015} for details. Let $X$ be a variety over $k$. Denote the Grothendieck group of constructible $\FFbarl$ (resp. $\ZZbarl$, resp. $\QQbarl$)-sheaves on $X$ by $K(X, \FFbarl)$ (resp. $K(X, \ZZbarl)$, resp. $K(X, \QQbarl)$). There are natural group homomorphisms: 
\[\begin{tikzcd}
	{K(X, \overline{\mathbf{F}}_\ell)} & {K(X, \overline{\mathbf{Z}}_\ell)} & {K(X, \overline{\mathbf{Q}}_\ell)}
	\arrow["{i^*}", shift left=2, from=1-1, to=1-2]
	\arrow["{i_*}", shift left, from=1-2, to=1-1]
	\arrow["{j^*}", from=1-2, to=1-3]
\end{tikzcd}\]
where $i^*$, $i_*$, and $j^*$ are induced by the reduction, restricting scalars, and  tensoring to $\QQbarl$, respectively. It is known that $i_*=0$, $i^*$ is surjective, and $j^*$ is an isomorphism. Define the \underline{decomposition homomorphism} $d: K(X, \QQbarl)\rightarrow K(X, \FFbarl)$ as $i^*\circ (j^*)^{-1}$. 

\begin{definition}[$CC$ for rational coefficients, {\cite[\nopp 5.3.2]{umezaki_characteristic_2020}}]\label{def_CC_rat}
    Let $\Lambda$ be rational. For $\CF\in D(X)$, $CC(\CF):=CC(d[\CF\OX_\Lambda \QQbarl])$. Here “[  ]” denotes the class in $K(X, \QQbarl)$. 
\end{definition}

We will drop “[  ]” and “$-\OX_\Lambda\QQbarl$” from the notation if there is no risk of confusion. Here by $CC(d[\CF\OX_\Lambda \QQbarl])$ we mean the characteristic cycle of any representative for $\CF\OX_\Lambda \QQbarl$ which is defined over some finite extension of $\FF_\ell$. This is well-defined because $CC$ is additive and does not change under coefficient field extensions (which can be seen, for example, using the Milnor formula and the fact that Swan conductors do not change under coefficient field extensions).\\

Concretely, $CC(\CF)$ can be computed as follows: let $Q$ be a large enough finite extension of $\QQl$ on which $\CF$ is defined. Denote by $Z$ its ring of integers, and by $F$ the residue field. Choose any integral model $\CF_0$ for $\CF$ (i.e. a $Z$-sheaf $\CF_0$ such that $\CF_0\OX_Z Q\cong \CF$). Let $\overline{\CF}_0=\CF_0\OX_Z F$ be the reduction. Then $[\overline{\CF}_0]=d[\CF\OX_E\QQbarl]$, and $CC(\CF)=CC(\overline{\CF}_0)$. \\

As the operations $f^*, f_*, f^!, f_!, \OX$ and $\mathcal{RH}om$ are exact functors between triangulated categories, they induce the corresponding operations on the Grothendieck groups, denoted by the same letters. The decomposition homomorphism commutes with all these operations. For $f^*, f_*, f^!, f_!$, this is stated in \cite[\nopp 5.2.7]{umezaki_characteristic_2020}, for $\OX$ and $\mathcal{RH}om$, this is verified in the following. 

\begin{lemma}\label{lem_d_OXRHom}
    Let $\Lambda$ be rational. Let $\CF, \CG$ be sheaves on a variety. Then $d(\CF\OX\CG)=(d\CF)\OX (d\CG), d(\RHom(\CF,\CG))=\RHom(d\CF,d\CG)$.
\end{lemma}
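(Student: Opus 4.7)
The plan is to use the integral-model description of $d$ recalled immediately before the lemma. Fix $\CF,\CG\in D^b_c(X,\QQbarl)$, choose a finite extension $E/\QQ_\ell$ with ring of integers $Z$, uniformiser $\pi$, and residue field $F$, large enough so that both $\CF$ and $\CG$ descend to $E$-sheaves, and fix integral models $\CF_0,\CG_0\in D^b_c(X,Z)$. By definition $d[\CF]=[\overline{\CF}_0]$ and $d[\CG]=[\overline{\CG}_0]$, where $\overline{\CF}_0:=\CF_0\OX_Z F$ and $\overline{\CG}_0:=\CG_0\OX_Z F$. Both identities will be reduced to compatibilities of $\OX$ and $\RHom$ with the coefficient base changes $Z\hookrightarrow E$ and $Z\twoheadrightarrow F$.

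For the tensor product, $\CF_0\OX_Z\CG_0\in D^b_c(X,Z)$ is an integral model of $\CF\OX\CG$, because derived tensor product commutes with the flat base change $Z\hookrightarrow E$. Its reduction modulo $\pi$ is computed by associativity of the derived tensor product:
\[
(\CF_0\OX_Z\CG_0)\OX_Z F\;\cong\;\CF_0\OX_Z\bigl(F\OX_F \overline{\CG}_0\bigr)\;\cong\;\overline{\CF}_0\OX_F \overline{\CG}_0.
\]
Passing to classes in $K(X,\FFbarl)$ yields $d[\CF\OX\CG]=d[\CF]\OX d[\CG]$.

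For the $\RHom$ identity, the strategy is parallel: one shows that $\RHom_Z(\CF_0,\CG_0)$ is an integral model for $\RHom(\CF,\CG)$, and that its reduction modulo $\pi$ is $\RHom_F(\overline{\CF}_0,\overline{\CG}_0)$. Both are instances of the base-change formula for $\RHom$, applied to the flat inclusion $Z\hookrightarrow E$ and the quotient $Z\twoheadrightarrow F$ respectively. They hold because an object of $D^b_c(X,Z)$ is perfect enough over $Z$: the ring $Z$ is a discrete valuation ring, so every such complex has finite $\mathrm{Tor}$-dimension over $Z$, and consequently $\RHom_Z(\CF_0,-)$ commutes with arbitrary base change of the coefficient ring. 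Granted these two compatibilities, one obtains $d[\RHom(\CF,\CG)]=[\RHom_F(\overline{\CF}_0,\overline{\CG}_0)]=\RHom(d[\CF],d[\CG])$.

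The main obstacle is the rigorous justification of the $\RHom$ base-change step. The tensor-product case is a formal consequence of associativity, but the $\RHom$ case requires one to verify, inside whichever model of $D^b_c(X,Z)$ one is working with (for instance as a suitable $2$-limit of complexes of $Z/\pi^n$-modules, as in \cite{zheng_six_2015}), that the integral models $\CF_0$ indeed behave as perfect objects with respect to the two coefficient base changes. Once this technical point is settled the remainder of the proof is just bookkeeping.
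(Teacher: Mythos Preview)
Your proposal is correct and follows essentially the same approach as the paper: choose integral models, check that $\CF_0\OX_Z\CG_0$ and $\RHom_Z(\CF_0,\CG_0)$ are integral models for $\CF\OX\CG$ and $\RHom(\CF,\CG)$, then compute their reductions. The only difference is that where you flag the $\RHom$ base-change compatibilities as a technical point to be verified, the paper simply cites the relevant results from \cite{zheng_six_2015} (specifically 5.3, 5.7, and 6.1) which supply exactly these compatibilities in the framework of $D^b_c$ with adic coefficients.
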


\begin{proof}
    Suppose $\CF, \CG$ are defined over a finite extension $Q$ of $\QQl$, denote by $Z$ (resp. $F$) the ring of integers (resp. residue field) of $Q$. Let $\CF_0, \CG_0$ be any integral models for $\CF, \CG$, denote their reductions by $\overline{\CF}_0, \overline{\CG}_0$.\\

    Essentially by the definition of $-\OX_Q-$ (\cite[\nopp 6.1]{zheng_six_2015}), $\CF_0\OX_Z\CG_0$ is an integral model for $\CF\OX_Q\CG$. So $d(\CF\OX\CG)=(\CF_0\OX_Z\CG_0)\OX_Z F=\overline{\CF}_0\OX_F\overline{\CG}_0=(d\CF)\OX(d\CG)$, where in the second equality we used \cite[\nopp 5.3]{zheng_six_2015}.\\

    Essentially by the definition of $\RHom_Q(-,-)$, $\RHom_Z(\CF_0,\CG_0)$ is an integral model for $\RHom_Q(\CF,\CG)$. So, $d(\RHom(\CF,\CG))=\RHom_Z(\CF_0,\CG_0)\OX_Z F=\RHom_F(\CF\OX_Z F, \CG\OX_Z F)=\RHom(d\CF,d\CG)$, where in the second equality we used \cite[\nopp 5.7]{zheng_six_2015}.

\end{proof}

One can thus transport results of characteristic cycles proved in the finite coefficient case to the rational coefficient case. Here are a few that we need but not explicitly stated in \cite{umezaki_characteristic_2020}.

\begin{proposition}\label{prop_CC_radon_rat}
    Let $\Lambda$ be rational.\\
    1) If $\CF$ is a sheaf on a smooth variety, then $CC\DD(\CF)=CC(\CF)$.\\
    2) Let $\CF$ be a sheaf on a smooth variety. Then $CC(\CF)$ is supported on $SS(\CF)$. If $\CF$ is perverse, nonzero, then the coefficients in $CC(\CF)$ is positive on each irreducible component of $SS(\CF)$. In particular, the support of $CC(\CF)$ equals $SS(\CF)$.\\
    3) If $\CG$ is a sheaf on a projective space $\PP$, then $CC(R\CG)=LCC(\CG)$, where $R$ is the Radon transform and $L$ is the Legendre transform (as defined above 7.5 in \cite{saito_characteristic_2017}).\\
    4) Let $X, Y$ be smooth varieties, $\CF_1, \CF_2$ be sheaves on $X, Y$, respectively. Then $CC(\CF_1\BX \CF_2)=CC(\CF_1)\BX CC(\CF_2)$ (see \cite[§2]{saito_characteristic_ext_2017} for the meaning of the notation).
\end{proposition}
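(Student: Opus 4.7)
The plan is to reduce each part to its established finite-coefficient analogue via the definition $CC(\CF) := CC(d[\CF\OX_\Lambda\QQbarl])$ and the fact that the decomposition homomorphism $d$ commutes with $f^*, f_*, f^!, f_!$ (stated in \cite[5.2.7]{umezaki_characteristic_2020}) as well as $\OX$ and $\RHom$ (the just-proved Lemma \ref{lem_d_OXRHom}). The setup is uniform: pick a finite extension $Q/\QQl$ over which $\CF$ is defined, choose an integral model $\CF_0$, and let $\overline{\CF}_0$ denote its reduction; by construction $CC(\CF) = CC(\overline{\CF}_0)$, and the compatibility of $d$ with sheaf operations lets one identify integral models of the outputs with the corresponding operations applied to integral models of the inputs.

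For (4), write $\CF_1\BX\CF_2 \cong pr_1^*\CF_1\OX pr_2^*\CF_2$ and use compatibility of $d$ with $pr_i^*$ and $\OX$ to produce an integral model whose reduction is the external tensor product of the reductions; then invoke the finite-coefficient formula \cite[2.2]{saito_characteristic_ext_2017}. For (1), apply $\DD\CF = \RHom(\CF,\pi_X^!\Lambda)$, and combine Lemma \ref{lem_d_OXRHom} with the $f^!$-compatibility of $d$ to see that reduction commutes with Verdier duality; then quote \cite[5.13.4]{saito_characteristic_2017}. For (3), write the Radon transform as $R\CG = p_{2!}(p_1^*\CG\otimes\CK)$ for the universal incidence correspondence (with $\CK$ a constant sheaf shift on the incidence variety) and again let $d$-compatibility transport the statement to the finite-coefficient \cite[7.5]{saito_characteristic_2017}.

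Part (2) is the subtlest point. The first assertion -- that $\mathrm{supp}\,CC(\CF)\subseteq SS(\CF)$ -- requires knowing that $SS$ in the rational setting, as defined in \cite[§5]{umezaki_characteristic_2020}, is computed from a reduction in the same fashion as $CC$; then the inclusion follows from the finite-coefficient statement \cite[5.17]{saito_characteristic_2017}. For the positivity and the reverse inclusion, one uses that if $\CF$ is perverse then the reduction $\overline{\CF}_0$ of any torsion-free integral model is perverse (a standard fact about $\ell$-adic integral models, essentially from \cite{illusie_autour_1994}), so the finite-coefficient positivity applies to every irreducible component of $SS(\overline{\CF}_0) = SS(\CF)$; together with additivity of $CC$ this forces the support of $CC(\CF)$ to equal $SS(\CF)$.

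The main obstacle I expect is part (2): one has to match the definition of $SS$ for rational coefficients with the definition of $CC$ via reduction, so that both invariants are genuinely computed from the same reduction $\overline{\CF}_0$. Once this is granted, together with the perversity of reductions of integral models of perverse sheaves, each of (1), (3), (4) is a direct formal transfer, and (2) follows from the finite-coefficient support statement plus positivity.
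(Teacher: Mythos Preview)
Your treatment of parts (1), (3), and (4) matches the paper's: reduce via the compatibilities of $d$ with the six functors and $\OX$, $\RHom$, then quote the finite-coefficient results.

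For part (2), your approach diverges from the paper's and has two genuine gaps. First, you write that the inclusion $\mathrm{supp}\,CC(\CF)\subseteq SS(\CF)$ ``requires knowing that $SS$ in the rational setting is computed from a reduction in the same fashion as $CC$.'' This is not how $SS$ is defined for rational coefficients (it has its own micro-support definition), and it is not true for an arbitrary integral model; what is true is that \emph{some} integral model $\CF_0$ satisfies $SS(\CF)=SS(\overline{\CF}_0)$. The paper cites Barrett \cite[1.5(v)]{barrett_singular_2023} for exactly this existence statement, which you would need as well. Second, for the positivity you invoke ``the reduction of any torsion-free integral model of a perverse sheaf is perverse'' and cite \cite{illusie_autour_1994}; that reference is about vanishing cycles and local monodromy, not integral models, so the citation is wrong. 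More seriously, even granting that fact, you would need the Barrett model (with matching $SS$) to be simultaneously torsion-free, and you have not argued this compatibility.

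The paper sidesteps this second issue entirely: rather than transferring positivity from the finite case, it argues directly in the rational setting using the Milnor formula \cite[5.3.3]{umezaki_characteristic_2020} together with the perverse $t$-exactness of $\Phi[-1]$ (this is what \cite[4.6]{illusie_autour_1994} is actually used for). That route only needs the Barrett model once, for the support inclusion, and does not require any perversity of reductions.
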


\begin{proof}
    1) We want to show $CC(\DD\CF)=CC(\CF)$. By definition, $CC(\DD\CF)=CC(d\DD\CF), CC(\CF)=CC(d\CF)$. By the corresponding result for finite coefficients (\cite[\nopp 5.13.4]{saito_characteristic_2017}), it suffices to show $d\circ\DD=\DD\circ d$, which is immediate from the commutativity of $d$ with $\RHom$.\\

    2) Let $\CF_0$ be an integral model for $\CF$, and $\overline{\CF}_0$ its reduction mod $\ell$, such that $SS(\CF)=SS(\overline{\CF}_0)$ (which exists, by \cite[\nopp 1.5 (v)]{barrett_singular_2023}). By definition, $CC(\CF)=CC(\overline{\CF}_0)$. The first claim follows. The second claim follows from the Milnor formula (\cite[\nopp 5.3.3]{umezaki_characteristic_2020}) and the well-known fact that the vanishing cycle shifted by $-1$ is perverse t-exact (c.f. \cite[\nopp 4.6]{illusie_autour_1994}).\\
    
    3) and 4) follow from the commutativity of $d$ with $f^*$, $f_*$, and $\OX$, and the corresponding results for finite coefficients (\cites[\nopp 7.12]{saito_characteristic_2017}[\nopp 2.2]{saito_characteristic_ext_2017}).

\end{proof}

\subsection{The notion of having the same wild ramification} In situations relevant to us, this notion is equivalent to having universally the same conductors (\cite[\nopp 6.11]{kato_wild_2021}). We will only review (and use) the latter, as it is easier to state and verify (in our situation). We refer to \cites{kato_wild_2018, kato_wild_2021} and references therein for details.

\begin{definition}[universally the same conductors for finite coefficients, {\cite[\nopp 2.5]{kato_wild_2018}}]
    Let $\Lambda$ be finite. Let $X$ be a variety over $k$. We say $\CF, \CF'\in D(X)$ have \underline{universally the same conductors} (usc), if for all smooth proper curve $\overline{C}$, all open dense $j: C\subseteq \overline{C}$, all map $f: C\rightarrow X$, all closed point $s\in \overline{C}$, we have $a_s(j_!f^*\CF)=a_s(j_!f^*\CF')$, where $a_s:=rk_{\overline{\eta}_s}+sw_{\overline{\eta}_s}-rk_s$ is the Artin conductor at $s$.
\end{definition}

\begin{theorem}[{\cite[\nopp 4.6.ii, 4.7]{kato_wild_2018}}]\label{thm_swr_kato}
    Let $\Lambda$ be finite. Let $f: X\rightarrow Y$ be a map between varieties.\\
    1) If $\CF, \CF'\in D(X)$ have usc, then $f_!\CF, f_!\CF'\in D(Y)$ have usc.\\
    2) Assume $X$ is smooth. If $\CF, \CF'\in D(X)$ have usc, then $CC(\CF)=CC(\CF')$.
\end{theorem}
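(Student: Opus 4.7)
My plan is to tackle the two parts separately but through a common thread: usc is defined via tests on curves, so everything reduces to understanding the Artin conductor $a_s$ of a pushforward, and then to Saito's local index formula.

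For part (1), the starting move is base change. Given test data $(\overline{C}, j: C\hookrightarrow\overline{C}, g: C\to Y, s\in\overline{C})$, I would form the Cartesian square
\[
\begin{tikzcd}
X\X_Y C \ar[r,"g'"] \ar[d,"f'"] & X \ar[d,"f"]\\
C \ar[r,"g"] & Y
\end{tikzcd}
\]
so that proper base change gives $g^*f_!\CF\cong f'_!g'^*\CF$. The task becomes computing $a_s(j_!f'_!g'^*\CF)$ in terms of conductors of restrictions of $\CF$ to curves mapping to $X$. I would apply a Grothendieck--Ogg--Shafarevich style formula to $f'$, which expresses the Artin conductor of $f'_!g'^*\CF$ at $s$ as a sum of local terms indexed by generic points of components of the special fibre $X\X_Y C\X_C\overline{\eta}_s$, each contribution being an Artin conductor on a local curve in $X\X_Y C$. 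By cutting $X\X_Y C$ with a generic Lefschetz-type pencil (inductively on relative dimension) and composing with $g'$, each such local contribution can be rewritten as $a_s$ of the restriction of $\CF$ along a composed map from a smooth curve to $X$. The usc hypothesis on $\CF, \CF'$ then yields the equality term by term.

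For part (2), the strategy is to use Saito's Milnor formula characterising $CC$: for a perverse sheaf $\CF$ on smooth $X$ and a function $h: X\to \AAA^1$ which is $SS(\CF)$-transversal except at an isolated critical point $x$, one has
\[
-\mathrm{dimtot}(\phi_h\CF)_x = (CC(\CF), dh)_{T^*X,x}.
\]
The left-hand side can be computed as $a_s$ applied to the restriction of $\CF$ along a small parametrised family of vanishing cycles, i.e.\ to a curve-test of the form $j_!(f_0)^*\CF$ for $f_0$ a map from a smooth curve to $X$ resolving the critical geometry of $h$. Hence usc forces the pairing $(CC(\CF), dh)_{T^*X,x} = (CC(\CF'), dh)_{T^*X,x}$ for every such generic transversal $h$ and every point $x$. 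Varying $h$ cuts out enough linear functionals on the $\ZZ$-module of cycles supported on a given conic closed subset to pin down $CC$ uniquely, giving $CC(\CF)=CC(\CF')$.

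The main obstacle is in part (1): the slicing must be carried out so that the auxiliary curves genuinely map to $X$ (not just to the fibre product), and so that the "boundary" contributions from non-generic slices cancel out in the alternating Euler-characteristic formula. This is the technical heart of Kato's argument and requires careful dimension-induction together with the additivity of $a_s$ across distinguished triangles; once this is in place, part (2) is largely a clean consequence of Saito's characterisation of $CC$.
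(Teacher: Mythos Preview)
The paper does not prove this theorem at all: it is stated with the citation \cite[\nopp 4.6.ii, 4.7]{kato_wild_2018} and used as a black box from Kato's work. There is therefore no ``paper's own proof'' to compare your proposal against.

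That said, a brief comment on your sketch as a stand-alone attempt. Your outline for part (2) is essentially correct: Saito's Milnor formula does characterise $CC$ via the total dimensions of vanishing cycles at isolated characteristic points, and these can indeed be read off as Artin conductors along suitable curves, so usc forces equality of all such local intersection numbers and hence of $CC$. Your outline for part (1), however, is too vague at the crucial step. The phrase ``cutting $X\X_Y C$ with a generic Lefschetz-type pencil'' does not by itself produce a finite expression for $a_s(j_!f'_!g'^*\CF)$ as a $\ZZ$-linear combination of curve-conductors of $\CF$ on $X$; the fibre product $X\X_Y C$ is in general singular and of arbitrary dimension over $C$, and there is no simple Grothendieck--Ogg--Shafarevich formula that decomposes the Artin conductor of a higher-dimensional pushforward into curve contributions in the way you suggest. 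Kato's actual argument (and the earlier work of Vidal, Yatagawa, and Saito--Yatagawa it builds on) proceeds instead through an alternative characterisation of ``same wild ramification'' in terms of Brauer traces of the action of wild inertia elements, for which preservation under $f_!$ follows from a trace formula; the equivalence of that notion with usc is a separate (and nontrivial) result. Your inductive slicing idea is not how this is done and, as stated, has a genuine gap.
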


Note that as $a_s$, $j_!$ and $f^*$ are additive, having usc descends to the Grothendieck group. This suggests that we can transport this notion to rational coefficients and get the analogue of the above theorem.

\begin{definition}[same wild ramification for rational coefficients]
    Let $\Lambda$ be rational. Let $X$ be a variety over $k$. We say $\CF, \CF'\in D(X)$ have the \underline{same wild ramification} (swr), or have \underline{universally the same conductors} (usc), if $d(\CF), d(\CF')$ do.
\end{definition}

\begin{theorem}\label{thm_swr}
    Let $\Lambda$ be rational, $f: X\rightarrow Y$ be a map between varieties.\\
    1) If $\CF, \CF'\in D(X)$ have the swr, then $f_!\CF, f_!\CF'\in D(Y)$ have the swr.\\
    2) Assume $X$ is smooth. If $\CF, \CF'\in D(X)$ have the swr, then $CC(\CF)=CC(\CF')$.
\end{theorem}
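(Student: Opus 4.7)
The statement is a direct transport of Theorem \ref{thm_swr_kato} along the decomposition homomorphism $d$, and the plan is simply to unwind the definitions and invoke the finite-coefficient result. The crucial preparatory facts are already in hand: (i) the definition of swr/usc for rational coefficients is stated via $d$; (ii) $d$ commutes with $f_!$ (noted just after Lemma \ref{lem_d_OXRHom}, citing \cite[\nopp 5.2.7]{umezaki_characteristic_2020}); and (iii) by Definition \ref{def_CC_rat}, $CC(\CF)=CC(d[\CF\OX_\Lambda\QQbarl])$ where the right-hand side is the finite-coefficient characteristic cycle.

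For part 1), I would argue as follows. By definition, $\CF,\CF'$ have swr means that $d[\CF\OX_\Lambda\QQbarl]$ and $d[\CF'\OX_\Lambda\QQbarl]$ (which live in $K(Y,\FFbarl)$) have usc in the finite-coefficient sense of Kato. Apply the commutativity of $d$ with $f_!$, together with the obvious compatibility of $-\OX_\Lambda\QQbarl$ with $f_!$, to get
\[
d[f_!\CF\OX_\Lambda\QQbarl]=f_!\,d[\CF\OX_\Lambda\QQbarl],\qquad d[f_!\CF'\OX_\Lambda\QQbarl]=f_!\,d[\CF'\OX_\Lambda\QQbarl].
\]
Since having usc is additive and hence descends to Grothendieck groups, Theorem \ref{thm_swr_kato}.1 applied to Kato's finite-coefficient setting yields that the right-hand sides have usc. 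Hence $f_!\CF,f_!\CF'$ have swr by definition.

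For part 2), again unwind: by definition, $CC(\CF)=CC(d[\CF\OX_\Lambda\QQbarl])$ and similarly for $\CF'$, where the right-hand sides are finite-coefficient characteristic cycles of Grothendieck-group classes. The swr hypothesis is exactly that $d[\CF\OX_\Lambda\QQbarl]$ and $d[\CF'\OX_\Lambda\QQbarl]$ have usc. Applying Theorem \ref{thm_swr_kato}.2 in the finite setting gives the equality of their characteristic cycles, which is precisely $CC(\CF)=CC(\CF')$.

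In short, there is essentially no obstacle: the rational-coefficient definitions of both $CC$ and swr were tailored so that the finite-coefficient theorem transports verbatim through $d$. The only sanity check required is that $d$ commutes with the single operation $f_!$ used in part 1), and this has already been recorded. One could even fold parts 1) and 2) into a single two-sentence proof.
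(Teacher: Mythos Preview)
Your proposal is correct and is exactly the paper's approach: both parts are transported from Theorem \ref{thm_swr_kato} via the decomposition homomorphism $d$, using that $d$ commutes with $f_!$ for part 1) and the definition of $CC$ for rational coefficients for part 2). (One typo: in your part 1), the classes $d[\CF\OX_\Lambda\QQbarl]$ live in $K(X,\FFbarl)$, not $K(Y,\FFbarl)$.)
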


\begin{proof}
    1) This follows from the corresponding statement for $\Lambda$ finite and the fact that the decomposition homomorphism commutes with $f_!$.\\
    
    2) This follows from the corresponding statement for $\Lambda$ finite and the definition of $CC$ for $\Lambda$ rational.

\end{proof}

\newpage
\printbibliography

\Addresses

\end{document}